\newcommand{\dd}{\mathrm{d}}
\newcommand{\g}{\mathtt{g}}
\newcommand{\Ric}{\mathrm{Ric}}
\newcommand{\R}{\mathds{R}}
\newcommand{\Z}{\mathds{Z}}
\newcommand{\C}{\mathds{C}}
\newcommand{\Hr}{\mathds{H}}
\newcommand{\Ca}{\mathds{C}\mathrm a}
\newcommand{\Ad}{\mathrm{Ad}}
\newcommand{\spec}{\mathrm{Spec}}
\newcommand{\Emb}{\mathrm{Emb}}
\renewcommand{\H}{\mathcal{H}}
\newcommand{\G}{\mathsf G}
\newcommand{\K}{\mathsf K}
\newcommand{\Hs}{\mathsf H}
\newcommand{\GG}{\mathscr G}
\newcommand{\Sp}{\mathsf{Sp}}
\newcommand{\SU}{\mathsf{SU}}
\newcommand{\U}{\mathsf{U}}
\newcommand{\SO}{\mathsf{SO}}
\newcommand{\Spin}{\mathsf{Spin}}
\newtheorem{theorem}{Theorem}[]
\newtheorem{lemma}[theorem]{Lemma}
\newtheorem{proposition}[theorem]{Proposition}
\newtheorem{corollary}[theorem]{Corollary}
\newtheorem*{mainthm}{Theorem}
\theoremstyle{definition}
\newtheorem{definition}[theorem]{Definition}
\theoremstyle{remark}
\newtheorem{remark}[theorem]{Remark}
\newtheorem{example}[theorem]{Example}
\title{Delaunay-type hypersurfaces in cohomogeneity one manifolds}
\author{Renato G. Bettiol \and Paolo Piccione}
\numberwithin{equation}{section}
\numberwithin{theorem}{section}
\address{\begin{tabular}{lll}
University of Pennsylvania & &Universidade de S\~ao Paulo \\
Department of Mathematics & & Departamento de Matem\'atica \\
209 South 33rd St  & & Rua do Mat\~ao, 1010 \\
Philadelphia, PA, 19104-6395, USA & & S\~ao Paulo, SP, 05508-090, Brazil\\
\emph{E-mail address}: {\tt rbettiol@math.upenn.edu} & & \emph{E-mail address}: {\tt piccione@ime.usp.br}
\end{tabular}
}
\date{July 13, 2015}
\thanks{The first named author is partially supported by the NSF grant DMS-0941615, USA. The second named author is supported by Fapesp and CNPq, Brazil.
}
\subjclass[2010]{Primary: 53C42, 58E09;  Secondary: 53A10, 58D10, 58D19, 58J55}
\begin{document}
\begin{abstract}
Classical Delaunay surfaces are highly symmetric constant mean curvature (CMC) submanifolds of space forms. We prove existence of Delaunay-type hypersurfaces in a large class of compact manifolds, using the geometry of cohomogeneity one group actions and variational bifurcation techniques. Our construction specializes to the classical examples in round spheres, and allows to obtain Delaunay-type hypersurfaces in many other ambient spaces, ranging from complex and quaternionic projective spaces to Kervaire exotic spheres.
\end{abstract}

\maketitle

\section{Introduction}
Let $M$ be a Riemannian manifold and $N\subset M$ be a hypersurface that is the boundary of an open bounded subset of $M$. It is a classical fact in Riemannian geometry that $N$ has constant mean curvature (CMC) if and only if it has stationary area among variations that preserve the enclosed volume. The value $\H$ of its mean curvature is precisely the Lagrange multiplier of this constrained isoperimetric variational problem. Recall that the \emph{mean curvature} $\H$ of a submanifold is the norm of its \emph{mean curvature vector} $\vec H$, which is the trace of the second fundamental form. A submanifold (of any codimension) for which $\H\equiv0$  is called \emph{minimal}.

Among the fundamental objects in the theory of CMC hypersurfaces are the so-called \emph{Delaunay surfaces}, which are families of rotationally symmetric CMC surfaces in $\R^3$. These families consist of round spheres, cylinders, unduloids, catenoids and nodoids. They were classified in 1841, when Delaunay~\cite{del} ingeniously observed that a rotationally symmetric surface in $\R^3$ has CMC if and only if its profile curve is a \emph{roulette} of a conic section. More precisely, if a conic section rolls without slipping along a line, the rotation around this line of the curve traced by one of its foci is a CMC surface. Conversely, all rotationally symmetric CMC surfaces in $\R^3$ are obtained this way. Unduloids, catenoids and nodoids are the surfaces obtained by performing this construction with an ellipse, a parabola and a hyperbola, respectively. These surfaces play a fundamental role in the study of general CMC surfaces, e.g., it is known that each end of an embedded CMC surface with finite topology converges exponentially to the end of some Delaunay surface \cite{kks}.

Similar constructions of rotationally symmetric CMC hypersurfaces exist in $S^n$, $\R^n$ and $\mathds H^n$. In particular, those in $S^n$ are compact and can be understood as bifurcating branches from the family of CMC Clifford tori, see \cite{ap,mpnod}. As suggested by Pacard~\cite{pacard}, ``it is then a natural question to investigate the existence of Delaunay-type CMC hypersurfaces in any compact Riemannian manifold'', which ``despite some partial results, remains completely open''.

The purpose of this paper is to generalize the above mentioned bifurcation approach, proving existence of \emph{Delaunay-type hypersurfaces} on a large class of highly symmetric compact manifolds. Roughly, these are families of CMC hypersurfaces that bifurcate from a given family of homogeneous CMC hypersurfaces, partially preserving symmetries.
We briefly point out that bifurcation of CMC hypersurfaces has been recently studied by various authors, e.g., \cite{ap,kgb,he,mpnod}. As is the case in other geometric problems, the presence of many symmetries greatly simplifies the situation. More precisely, we deal with \emph{cohomogeneity one} manifolds, which are manifolds $M$ that support an isometric action of a Lie group $\G$ with an orbit of codimension one; or equivalently, such that the orbit space $M/\G$ is one-dimensional.

Cohomogeneity one manifolds constitute an important class that generalizes homogeneous spaces and has received great attention in the past two decades, see \cite{book,aa,gzricci,gzannals,gwz,straume,vz}. This class is also quite large, including examples ranging from compact rank one symmetric spaces to Kervaire exotic spheres. Notice that for a general isometric action of $\G$ on $M$, the mean curvature vector field of each orbit $\G(x)$ is parallel, due to its $\G$-equivariance. In particular, if $\G(x)\subset M$ has codimension one, then $\G(x)$ is an embedded CMC homogeneous hypersurface. If $M$ has cohomogeneity one, interior points of $M/\G$ correspond to principal $\G$-orbits in $M$, i.e., orbits whose isotropy group $\Hs$, called \emph{principal isotropy}, is the smallest possible (hence $\G/\Hs$ has codimension one). Thus, there is a $1$-parameter family $x_t(\G/\Hs)$ of CMC homogeneous hypersurfaces of $M$, parametrized by the interior of $M/\G$. Boundary points of $M/\G$ correspond to nonprincipal orbits, that are called \emph{exceptional} if they have the same dimension as $\G/\Hs$, and \emph{singular} otherwise. Because they are isolated, any singular orbit $S$ on $M$ is a minimal submanifold (see \cite{book,hsiang,hl}), on which the principal orbits \emph{condense}. In other words, any tubular neighborhood of $S$ in $M$ is foliated by CMC homogeneous hypersurfaces (the principal orbits $x_t(\G/\Hs)$), which are precisely geodesic tubes around $S$, i.e., consist of points at fixed distance from $S$. Moreover, the value of the mean curvature of these CMC geodesic tubes goes to $+\infty$ as they condense on $S$, see \eqref{eq:sxtblowsup}.

The above observations bring us to the second question that motivates the present work. Mahmoudi, Mazzeo and Pacard~\cite{mmp} proved that a geodesic tube of almost any small radius around a nondegenerate minimal submanifold $S$ can be deformed to yield a CMC hypersurface. The radii for which this deformation fails, which they call \emph{resonant radii}, ``should correspond to other families of CMC hypersurfaces bifurcating from this main `tubular' family''. In the case $\dim S=1$, previously studied by Mazzeo and Pacard~\cite{mp}, there are explicit degenerate examples for which the ``elements of the bifurcating families have undulations modeled on Delaunay surfaces, but the geometric picture for $\dim S>1$ is unknown''.

As observed above, on any cohomogeneity one manifold $M$ with a singular orbit $S$, the family of principal orbits that foliates a tubular neighborhood of $S$ provides an explicit example of CMC geodesic tubes around a minimal submanifold $S$ that fill up the \emph{entire} 
neighborhood. Our construction of Delaunay-type hypersurfaces on $M$ is based on obtaining infinitely many bifurcation instants for this family, exactly at its resonant radii. A common phenomenon in bifurcating branches is break of symmetry, and in the present work the crucial point is that \emph{partial symmetry preservation} occurs at every bifurcation. This yields our Delaunay-type hypersurfaces, which are infinitely many families of new hypersurfaces with (large) constant mean curvature, accumulating at geodesic tubes around $S$. Moreover, they are invariant under a symmetry group $\GG$ related to the symmetries of $S$.
In this way, with the aid of a large symmetry group, we provide further insight on the geometric picture of the bifurcating families from the tubular family around a minimal submanifold of \emph{any} (positive) dimension.

With such motivations in mind, let us state our main result. Unless otherwise mentioned, $\G$ and $M$ will always be assumed to be smooth and compact, and the isometric $\G$-action will be assumed smooth and almost effective (i.e., the ineffective kernel is finite). Furthermore, we work with $\G$-invariant metrics on $M$ that we call \emph{adapted} near $S$, see Definition~\ref{def:adapted}.

\begin{mainthm}
Let $M$ be a cohomogeneity one $\G$-manifold with a singular orbit $S=\G/\K$, $\dim S\geq1$, and principal isotropy $\Hs$. Assume that either $\Hs\triangleleft\K$ or $\K\triangleleft\G$ and the metric on $M$ is $\K$-invariant and adapted near $S$. Then there is a sequence $\H_q\in\R$ such that $\H_q\to+\infty$ and for each $\H_q$ there are infinitely many embedded Delaunay-type hypersurfaces in $M$ diffeomorphic to $\G/\Hs$, with constant mean curvature arbitrarily close to $\H_q$. Furthermore, these families of CMC hypersurfaces are not $\G$-orbits, but invariant under a certain $\K$-action, and condense on $S$ as $q\to+\infty$.
\end{mainthm}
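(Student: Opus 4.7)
The plan is to cast the existence of Delaunay type hypersurfaces as a variational bifurcation problem in the $\K$-equivariant category, with trivial branch given by the family of principal orbits $x_t(\G/\Hs)$ condensing on $S$. First, I would use the equivariant tubular neighborhood theorem to identify a $\G$-invariant neighborhood of $S$ with the normal disk bundle of $S$, so that $x_t(\G/\Hs)$ is realized as the geodesic tube of radius $t$, and any nearby hypersurface is a normal graph $\{\exp_p(u(p)\,\nu_p):p\in x_t(\G/\Hs)\}$ for a small function $u\colon\G/\Hs\to\R$. The $\K$-invariance of the hypersurface translates into $\K$-invariance of $u$, and the hypothesis $\Hs\triangleleft\K$ or $\K\triangleleft\G$ is exactly what is needed to linearize the $\K$-action on $C^{2,\alpha}(\G/\Hs)$ and cut out a smooth Banach submanifold of $\K$-invariant graphs; without such a hypothesis, the spaces of $\K$-invariant graphs over different tubes would fail to assemble into a single vector bundle over the radius parameter.

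Next, the constrained area functional (area minus $\H$ times enclosed volume) restricted to $\K$-invariant graphs has the trivial family $u\equiv 0$ as a curve of CMC critical points parametrized by $t$, with mean curvature $\H(t)\to+\infty$ as $t\to 0^+$ by \eqref{eq:sxtblowsup}. Its linearization is the Jacobi operator $J_t = -\Delta_t - (|A_t|^2 + \Ric(\nu_t,\nu_t))$, an elliptic $\G$-equivariant operator on $\G/\Hs$. Restricted to the $\K$-invariant sector, its spectrum is discrete and depends smoothly on $t$, and the key analytical task is to exhibit a sequence of parameters $t_q\to 0^+$ at which infinitely many of these eigenvalues cross zero transversally. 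The adapted-metric assumption provides an explicit warped-product description near $S$, reducing the $\K$-invariant eigenvalue problem to a one-parameter family of Sturm--Liouville type ODEs; since the zeroth-order term of $J_t$ blows up as $t\to 0^+$, these problems acquire arbitrarily many negative eigenvalues, producing the desired sequence of resonant radii $t_q$ and corresponding values $\H_q := \H(t_q)$.

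At each $t_q$, the jump of the equivariant Morse index along the trivial branch triggers variational bifurcation by the techniques employed in \cite{ap,mpnod,kgb}: there is a continuous family of nontrivial $\K$-invariant CMC critical points accumulating on $x_{t_q}(\G/\Hs)$, along which the mean curvature takes infinitely many distinct values arbitrarily close to $\H_q$. These yield the claimed Delaunay type hypersurfaces diffeomorphic to $\G/\Hs$. To see that they are not $\G$-orbits, one uses that any $\G$-orbit sufficiently close to $x_{t_q}(\G/\Hs)$ is itself principal and hence lies on the trivial branch; condensation on $S$ as $q\to+\infty$ is immediate from $t_q\to 0^+$.

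The main obstacle I foresee is the spectral step: one must show that, separately in the $\Hs\triangleleft\K$ and $\K\triangleleft\G$ cases, the $\K$-invariant sector of the Jacobi operator (and not merely its full spectrum) already accumulates infinitely many transversal zero crossings as $t\to 0^+$. This amounts to exhibiting enough nonconstant $\K$-invariant eigenfunctions on $\G/\Hs$, a representation-theoretic requirement that the normal-subgroup hypotheses are precisely designed to guarantee, and then to controlling the ODE family from the warped-product form well enough to establish transversality so that the equivariant variational bifurcation criteria apply.
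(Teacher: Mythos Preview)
Your overall architecture matches the paper's: one restricts the area-minus-volume functional to a space of $\GG$-invariant embeddings (where $\GG=\K$ if $\K\triangleleft\G$ and $\GG=\K/\Hs$ if $\Hs\triangleleft\K$), applies Palais' symmetric criticality, and detects bifurcation from the trivial branch $x_t$ via jumps in the $\GG$-equivariant Morse index of the Jacobi operator $J_t=\Delta_t-\rho(t)$, with $\rho(t)=\Ric(\vec n_t)+\|\mathcal S_t\|^2\to+\infty$ as $t\to 0$. The final paragraph (break of $\G$-symmetry, condensation) is also correct.

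There is, however, a genuine gap in your spectral step, and it is exactly the step you flag as the main obstacle. You propose that the adapted metric gives a ``warped-product description'' reducing the $\K$-invariant eigenvalue problem to ``Sturm--Liouville type ODEs''. This is not what happens, and it would not work: the $\GG$-invariant eigenvalue problem on $(\G/\Hs,\g_t)$ does not reduce to an ODE unless $\dim S=1$. The correct reduction is to the Laplacian of the \emph{base} $S=\G/\K$. The normality hypothesis is not used to ``linearize the $\K$-action'' or to assemble a bundle of invariant graphs; it is used (Lemmas~\ref{lemma:normalityA} and~\ref{lemma:normalityB}) to produce an isometric $\GG$-action on $D(S)$ whose orbits on each principal orbit $x_t(\G/\Hs)$ are precisely the fibers of the homogeneous fibration $q\colon\G/\Hs\to S$. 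Hence a function on $\G/\Hs$ is $\GG$-invariant if and only if it is the lift of a function on $S$. The adapted-metric hypothesis then makes $q\colon(\G/\Hs,\g_t)\to(S,\alpha(t)^2\g_S)$ a Riemannian submersion with totally geodesic fibers, so that $\Delta_t(\psi\circ q)=\alpha(t)^{-2}(\Delta_S\psi)\circ q$ (Proposition~\ref{prop:eigenlift}). Consequently the $\GG$-invariant spectrum of $\Delta_t$ is exactly $\alpha(t)^{-2}\spec(\Delta_S)$, an essentially fixed discrete set, and the $\GG$-Morse index of $x_t$ is the number of eigenvalues $\lambda$ of $\Delta_S$ with $\lambda<\rho(t)\alpha(t)^2$. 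Since $\rho(t)\alpha(t)^2\to+\infty$, this index goes to infinity, and one obtains the sequence $t_q$ by choosing instants on either side of each level set $\{\rho(t)\alpha(t)^2=\lambda_q\}$.

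Two further remarks. First, no transversality of eigenvalue crossings is needed: the bifurcation criterion (Proposition~\ref{prop:symmpreserv}) only requires a net change in the equivariant Morse index between two $\GG$-nondegenerate endpoints, and the argument above gives a strictly positive jump equal to the multiplicity of $\lambda_q$. Second, the hypothesis $\dim S\ge 1$ enters precisely here: it guarantees $\spec(\Delta_S)$ is infinite, hence supplies infinitely many nonconstant $\GG$-invariant eigenfunctions; your ``representation-theoretic requirement'' is really just this.
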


There is a subtle difference in the meaning of \emph{Delaunay-type hypersurface} depending on which of the normality conditions  $\Hs\triangleleft\K$ or $\K\triangleleft\G$ is satisfied. On the one hand, if $\Hs\triangleleft\K$, the infinitely many bifurcating branches of CMC hypersurfaces issuing from the family of principal orbits near $S=\G/\K$ are invariant under the subaction of $\K$ on $M$, i.e., the restriction of the $\G$-action to $\K$. On the other hand, if $\K\triangleleft\G$, then such branches of CMC hypersurfaces are invariant under a different $\K$-action (with respect to which the metric is assumed invariant). This is the right $\K$-action on a tube around $S$ described in Lemma~\ref{lemma:normalityB}. The kernel of this action is $\Hs$, so effectively this is a free $\K/\Hs$-action.
The unifying feature of these $2$ cases is that bifurcating branches issue from a natural $1$-parameter family of (homogeneous) CMC hypersurfaces that condense on a minimal submanifold $S$ and partially preserve the symmetries of the natural branch. The type of symmetry preservation is different according to $\Hs\triangleleft\K$ or $\K\triangleleft\G$, and we denote by $\GG$ the corresponding symmetry group, respectively $\K/\Hs$ or $\K$. In both cases, the geometric interpretation of these symmetries is that the Delaunay-type hypersurfaces are tubular graphs of (nonconstant) real-valued functions on $S$. As explained above, they accumulate on geodesic tubes, which are the tubular graphs of constant functions on $S$.

The main reason to call the above \emph{Delaunay-type hypersurfaces} is that both situations occur simultaneously in the classical case of Delaunay surfaces in $S^3$.
In this case, $\G=\mathsf T^2$ is a torus, $\K=\mathsf S^1$, and $\Hs$ is trivial, so both normality conditions hold. The corresponding Delaunay surfaces are CMC tori of unduloid type, that bifurcate from the family of Clifford tori, which condenses on a great circle $S$ in $S^3$, see Subsection~\ref{subsec:clifftori} for details.
The only cohomogeneity one $\G$-manifolds with a singular orbit such that both normality assumptions hold have trivial $\Hs$ and $\K=\mathsf S^1$ or $\K=\mathsf S^3$. This essentially forces the action to be a sum action on a sphere, as the above case of $S^3$, with generalized Clifford tori as principal orbits, see Subsections \ref{subsec:highdimclifftori} and ~\ref{subsec:kprod}. It is plausible to conjecture that a similar bifurcation result should hold without \emph{any} normality assumption (see Remark~\ref{rem:mcf}), although in this case no symmetry is to be expected for the CMC surfaces in the bifurcating branches.

The proof of the above result has two main components. First, establishing the appropriate cohomogeneity one setup; second, finding a corresponding bifurcation criterion with symmetry preservation. The cohomogeneity one framework is very natural in this context, since it provides a canonical $1$-parameter family of homogeneous CMC hypersurfaces from which Delaunay-type hypersurfaces can bifurcate. However, the spectral flow of the corresponding Jacobi operators is complicated, and can only be properly analized using the symmetry group $\GG$. At the same time, these are the symmetries to be preserved in the bifurcating branches, and finding them requires a deeper analysis of the geometry of cohomogeneity one manifolds, see Lemmas~\ref{lemma:normalityA} and \ref{lemma:normalityB}. Our bifurcation criterion with symmetry preservation uses these other symmetries and relies on conveniently combining a classical bifurcation criterion with the Symmetric Criticality Principle of Palais \cite{palais}, see Proposition~\ref{prop:symmpreserv}. Although we only state and use it in the CMC framework, in principle, this technique applies to other equivariant geometric variational problems and hence has some interest in its own right. For instance, similar techniques were recently used by the authors to study bifurcation of homogeneous solutions to the Yamabe problem~\cite{bp0,bp}.

Let us give a sample of Delaunay-type hypersurfaces $N$ invariant under a group $\GG$ that can be obtained in spheres and projective spaces as an application of the above Theorem. For simplicity, we refer to some of these submanifolds by their double coverings. Here, $T_1 S^k$ denotes the unit tangent bundle of the sphere $S^k$ and $\Sigma^{2k-1}$ denotes a possibly exotic sphere, see Sections~\ref{sec:ex1}, \ref{sec:ex2} and \ref{sec:ex3} for details.

\begin{table}[htf]
\begin{tabular}{|ccc|}
\hline
$M$ \rule{0pt}{2.5ex} \rule[-1.2ex]{0pt}{0pt} & $N$ & $\GG$ \\
\hline \hline
$S^{k+1}$  \rule{0pt}{2.5ex} & $S^1\times S^{k-1}$ &  $\SO(k)$ \\
$S^{2k+3}$ & $S^1\times S^{k+1}$ & $\mathsf S^1$\\
$\Sigma^{2k-1}$ & $S^1\times T_1 S^{k-1}$ & $\mathsf S^1$ \\
\hline
\end{tabular} \quad
\begin{tabular}{|ccc|}
\hline
$M$ \rule{0pt}{2.5ex} \rule[-1.2ex]{0pt}{0pt} & $N$ & $\GG$ \\
\hline \hline
$\C P^{k}$ \rule{0pt}{2.5ex} & $S^{2k-1}$ & $\mathsf S^1$ \\
$\Hr P^{k}$ & $S^{4k-1}$ & $\mathsf S^3$ \\
$\C P^{k}$ & $T_1 S^{k}$ & $\mathsf S^1$ \\
\hline
\end{tabular}
\end{table}
The Delaunay-type tori $S^1\times S^{k-1}$ in $S^{k+1}$ bifurcate from the family of Clifford tori, as previously observed in \cite{ap}. The Delaunay-type spheres in $\C P^k$ and $\Hr P^k$ bifurcate from a family of distance spheres around any given point, which are metrically Berger spheres, see Example~\ref{ex:cpn}. These and all other CMC hypersurfaces listed above are, to the best of our knowledge, new examples of non-homogeneous CMC hypersurfaces in such ambient spaces. In dimensions up to $7$ we use a classification of Hoelscher~\cite{hoelscher} to list all possible Delaunay-type hypersurfaces originating from \emph{primitive} cohomogeneity one actions, see Section~\ref{sec:ex4}. We stress that the applications of our main result have a much larger scope than the above concrete examples. In particular, we describe two procedures to generate other examples (of arbitrarily large dimension) out of known ones, see Section~\ref{sec:ex5}.

The paper is organized as follows. In Section~\ref{sec:cohom1} we recall basic geometric and topological aspects of cohomogeneity one manifolds and study normality conditions among isotropy groups. We also describe a convenient framework for invariant metrics and the spectrum of the Laplacian of principal orbits. Details about the variational formulation of the CMC problem and our main bifurcation criterion (Proposition~\ref{prop:symmpreserv}) are presented in Section~\ref{sec:variational}. In Section~\ref{sec:mainpf}, we prove our main result (Theorem~\ref{thm:main}), that immediately implies the Theorem stated above. Finally, Sections~\ref{sec:ex1}, \ref{sec:ex2}, \ref{sec:ex3}, \ref{sec:ex4} and \ref{sec:ex5} describe various examples and constructions to which the above results apply, providing many examples of Delaunay-type hypersurfaces.

\smallskip
\noindent
{\bf Acknowledgement.} It is a pleasure to thank Karsten Grove and Wolfgang Ziller for many valuable comments and suggestions.

\section{Geometry of cohomogeneity one manifolds}
\label{sec:cohom1}

In this section we lay down the basic framework of cohomogeneity one manifolds, fixing notation and describing the necessary hypotheses. Although the structure of such manifolds is well documented in the literature, see \cite{book,aa,gzricci,gzannals,gwz,vz}, we recall some of its basic aspects as a service to the reader.

\subsection{Topological structure}\label{subsec:topology}
A connected Riemannian manifold $(M,\g)$ is said to have \emph{cohomogeneity one} if it supports an isometric action of a compact Lie group $\G$ with at least one orbit of codimension $1$. This means that the orbit space $M/\G$ is one-dimensional, and since $\G$ is closed in the full isometry group of $(M,\g)$, we must have, up to renormalization, one of:
\begin{itemize}
\item[(i)] $M/\G=\R$, the real line;
\item[(ii)] $M/\G=[0,+\infty[$, a half-line;
\item[(iii)] $M/\G=S^1$, the circle;
\item[(iv)] $M/\G=[-1,1]$, an interval.
\end{itemize}
As mentioned above, interior points of $M/\G$ correspond to principal orbits and boundary points correspond to nonprincipal orbits, that can be singular or exceptional. Since we are interested in manifolds that have singular orbits, we leave aside cases (i) and (iii), where $M$ is a bundle over $M/\G$ with fiber a principal orbit. In case (ii), $M$ must be noncompact and there is exactly one nonprincipal orbit; however under the assumptions that $M$ is primitive and effective, this nonprincipal orbit is a point, see \cite[Thm 8.1 (iii)]{aa}. We also leave this case aside, since we are interested in singular orbits of positive dimension. Results analogous to ours in the case of singular orbits that are fixed points were obtained by Ye~\cite{ye1,ye2}. We are thus left with case (iv), which is the one that allows the most interesting topological constructions. Henceforth, we only deal with manifolds in this class.

Denote by $\pi\colon M\to M/\G=[-1,1]$ the projection map, and let $\gamma\colon\R\to M$ be a unit speed geodesic starting at a point $\gamma(0)\in M$ in a principal orbit, such that $\pi(\gamma(0))=0\in [-1,1]$, with initial velocity $\dot\gamma(0)$ perpendicular to the principal orbit $\pi^{-1}(0)$. Then $\gamma(t)$ meets all orbits perpendicularly, and is hence a horizontal geodesic.
The image $\gamma(\R)$ is either an embedded circle or a one-to-one immersed line \cite{aa}. Denote by $\Hs:=\G_{\gamma(0)}$ the isotropy group at $\gamma(0)$, which is equal to the isotropy groups $\G_{\gamma(t)}$ for all $t\neq 1\mod 2\Z$; and by $\K_\pm:=\G_{\gamma(\pm1)}$ the isotropy groups at $\gamma(\pm1)$, respectively. The group $\Hs$ is called \emph{principal isotropy group} and $\K_\pm$ are called \emph{singular} or \emph{exceptional isotropy groups}, according to the nonprincipal orbit $S_\pm:=\G/\K_\pm$ having dimension less than or equal to $\G/\Hs$, respectively. These are the possible \emph{orbit types} on a cohomogeneity one manifold. In this context, requiring the presence of a singular orbit $S$ that is also not a fixed point is a quite natural assumption. For instance, if $M$ is simply-connected and $\G$ is connected, there are no exceptional orbits \cite[Lemma 1.6]{gwz}, and hence any nonprincipal orbit must be singular.

Since $M/\G=[-1,1]$, the manifold $M$ is the union of tubular neighborhoods of $S_\pm$ glued along their common boundary, i.e., $M=D(S_-)\cup_{\pi^{-1}(0)} D(S_+)$, where $D(S_-):=\pi^{-1}([-1,0])$ and $D(S_+):=\pi^{-1}([0,1])$. Let $D_\pm$ be the \emph{slice} of $S_\pm=\G/\K_\pm$ at $\gamma(\pm1)$, i.e., the normal disk to the orbit $S_\pm$. Recall that $\K_\pm$ acts on $D_\pm$ via the slice representation. From the Slice Theorem, $D(S_\pm)$ is equivariantly diffeomorphic to $\G\times_{\K_\pm} D_\pm$, the associated bundle with fiber $D_\pm$ to the $\K_\pm$-principal bundle $\K_\pm\to \G\to \G/\K_\pm$. Thus, $M$ decomposes (up to equivariant diffeomorphism) as
\begin{equation}\label{eq:decomp}
M= \big(\G\times_{\K_-} D_-\big) \cup_{\G/\Hs} \big(\G\times_{\K_+} D_+\big).
\end{equation}
Moreover, $\K_\pm/\Hs=\partial D_\pm$ are spheres; more precisely, normal spheres to $S_\pm$ at $\gamma(\pm1)$, on which $\K_\pm$ acts transitively. In this way, $M$ is completely determined by its \emph{group diagram}
\begin{equation}\label{eq:groupdiagram}
\begin{gathered}
 \xymatrix@=10pt{& \G & \\ \K_- \ar[ur] & & \K_+ \ar[ul] \\ & \Hs\ar[ur]\ar[ul] &}
\end{gathered}
\end{equation}
where the arrows denote the natural inclusions. Conversely, given compact Lie groups $\Hs\subset\{\K_-,\K_+\}\subset \G$ as in the diagram above, such that $\K_\pm/\Hs$ are spheres, there is a unique (up to diagram equivalence) cohomogeneity one $\G$-manifold $M$ with principal isotropy $\Hs$ and singular isotropies $\K_\pm$, defined by \eqref{eq:decomp}, see~\cite{book,gwz} for details.
Finally, by the above group diagram, we have the homogeneous fibrations
\begin{equation}\label{eq:homfib}
\K_\pm/\Hs\longrightarrow \G/\Hs\stackrel{q_\pm}{\longrightarrow} \G/\K_\pm, \quad q_\pm(g\Hs):=g\K_\pm.
\end{equation}
The fibers of \eqref{eq:homfib} are the above mentioned normal spheres to $S_\pm=\G/\K_\pm$. Viewed inside $\G/\Hs$, they are the subsets formed by cosets of the form $gk\Hs$, i.e.,
\begin{equation}\label{eq:fibers}
q_\pm^{-1}(g\K_\pm) = (g\K_\pm)\Hs\subset \G/\Hs.
\end{equation}

\subsection{Normality assumptions}
Our main result requires one of the normality assumptions $\K\triangleleft\G$ or $\Hs\triangleleft\K$.\footnote{When we omit the subscripts $_\pm$ we are referring to either one of the two ``halfs" \eqref{eq:decomp} of $M$.}
Although both are strong requirements, the class of cohomogeneity one manifolds satisfying either of them is still very large. We now discuss a few issues related to each of these assumptions.

\subsubsection{$\K\triangleleft\G$}
The presence of a proper normal subgroup in a compact connected Lie group $\G$ is a strong condition. In particular, it implies that there exists a connected normal subgroup $\mathsf L\triangleleft\G$ such that $\G=\K\cdot\mathsf L$, i.e., $\G$ is a quotient of the product $\K\times\mathsf L$ by a finite central subgroup.

This normality condition also has strong geometric consequences in the context of a cohomogeneity one manifold. As introduced by Grove and Searle~\cite{gs}, an abstract isometric $G$-action on $M$ is called \emph{fixed point homogeneous} if the codimension of the fixed point set $M^G$ viewed inside the orbit space $M/G$ is equal to $1$. Equivalently, $G$ acts transitively on a normal sphere to some component of $M^G$.

\begin{lemma}\label{lemma:normalityA}
Let $M$ be a manifold with a cohomogeneity one $\G$-action having a normal singular isotropy $\K\triangleleft \G$. Then the $\K$-action on $M$ obtained as restriction of the $\G$-action is fixed point homogeneous and its principal orbits are the fibers $g\K\Hs$ of the corresponding homogeneous fibration \eqref{eq:homfib}.
\end{lemma}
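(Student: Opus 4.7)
The plan is to exploit normality $\K\triangleleft\G$ twice: once to show that $\K$ fixes every point of the singular orbit $S=\G/\K$, and once to identify the $\K$-orbits inside each principal $\G$-orbit $\G/\Hs$. First I would observe that for any $k\in\K$ and $g\in\G$, normality gives $g^{-1}kg\in\K$, so $k\cdot g\K=kg\K=g(g^{-1}kg)\K=g\K$. Hence the entire singular orbit $S$ lies in the fixed point set $M^\K$. Similarly, the $\K$-orbit through $g\Hs\in\G/\Hs$ is
\[
\K\cdot g\Hs \;=\; \K g\Hs \;=\; g\,(g^{-1}\K g)\,\Hs \;=\; g\K\Hs,
\]
which by \eqref{eq:fibers} is exactly the fiber $q_-^{-1}(g\K)$ of the homogeneous fibration $\K/\Hs\to \G/\Hs\to\G/\K$ in \eqref{eq:homfib}.

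Next I would verify that these fibers are the \emph{principal} $\K$-orbits on the $\G$-regular part of $M$. Since $\K/\Hs$ is a sphere of positive dimension, each $\K$-orbit $g\K\Hs$ has dimension $\dim\K-\dim\Hs=\dim(\G/\Hs)-\dim S$. Any other $\K$-orbit must be contained in some $\G$-orbit, and the above computation shows every principal $\G$-orbit is foliated by $\K$-orbits of this maximal dimension; so these are principal for the $\K$-action, proving the second assertion.

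Finally, I would establish fixed point homogeneity. Writing $\K=\K_-$, the Slice Theorem identifies a tubular neighborhood of $S$ with $\G\times_{\K}D_-$, whose normal sphere at $\gamma(-1)$ is $\partial D_-=\K/\Hs$, on which $\K$ acts transitively by definition. Equivalently, a dimension count gives $\dim(M/\K)=\dim M-(\dim\K-\dim\Hs)=\dim\G-\dim\K+1=\dim S+1$, so $S\subset M^\K$ has codimension one in $M/\K$, which is precisely the Grove--Searle fixed point homogeneity condition.

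The calculations themselves are essentially formal, so the only delicate point is bookkeeping: making sure that $\K$ acting on $\G/\K$ by \emph{left} translation really does fix every coset (which requires normality, not merely $\K g=g\K$ as sets of left cosets of $\Hs$), and that the $\K$-orbits on $\G/\Hs$ exhaust the fibers of $q_-$ rather than merely being contained in them. Once normality is applied carefully in both places, the two assertions of the lemma drop out simultaneously.
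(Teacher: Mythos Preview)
Your proof is correct and follows essentially the same approach as the paper: use normality to show $\K$ fixes $S=\G/\K$ pointwise, identify the $\K$-orbits on principal $\G$-orbits as the fibers $g\K\Hs$, and invoke transitivity of $\K$ on the normal sphere $\K/\Hs$ to conclude fixed point homogeneity. Your version is simply more detailed, adding the explicit check that the fibers are principal $\K$-orbits and the alternative dimension count for the codimension-one condition.
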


\begin{proof}
Since $\K\triangleleft\G$, the left translation action of $\K$ on $\G/\K$ is trivial. Thus, the restriction to $\K$ of the $\G$-action on $M$ fixes the singular $\G$-orbit $S=\G/\K$. Moreover, $\K$ acts transitively on the normal sphere $\K/\Hs$ to $S$, hence the $\K$-action on $M$ is fixed point homogeneous, see \cite{gs}. Clearly, its principal orbits are $\K g\Hs=g\K\Hs$, i.e., the fibers \eqref{eq:fibers} of the homogeneous fibration $q\colon \G/\Hs\to\G/\K$.
\end{proof}

\begin{remark}
The subaction of $\K$ on $M$ is always well-defined, however its orbits may fail to be the fibers of \eqref{eq:homfib} if $\K$ is not normal in $\G$. For instance, consider the cohomogeneity one action of $\Spin(9)$ on the Cayley plane $\Ca P^2$. This action has a fixed point $S_-=\{p\}$, corresponding to $\K_-=\Spin(9)$, and the principal orbits are distance spheres $S^{15}$ centered at $p$, corresponding to $\Hs=\Spin(7)$. These principal orbits fiber over the other singular orbit, which is the cut locus of $p$, namely $S_+=S^8(1/2)$, corresponding to $\K_+=\Spin(8)$. The homogeneous fibration $\K_+/\Hs\to \G/\Hs\to \G/\K$ is the Hopf fibration $S^7\to S^{15}\to S^8(1/2)$. Clearly, $\K_+$ is not normal in $\G$, and we claim that the orbits of the $\K_+$-action on $\G/\Hs$ are not the fibers $S^7$ of the Hopf fibration. Instead, this $\K_+$-action on $S^{15}$ has itself cohomogeneity one, with singular isotropies $\Spin(7)$ and principal isotropy $\mathsf G_2$. Thus, the singular $\K_+$-orbits on $\G/\Hs$ are $S^7$ (and these are actually two antipodal Hopf fibers), but all the remaining orbits are hypersurfaces $\Spin(8)/\mathsf G_2$ of $S^{15}$.
\end{remark}

\subsubsection{$\Hs\triangleleft\K$}
We start by characterizing this algebraic condition geometrically, in terms of the slice representation at the singular orbit $\G/\K$.

\begin{proposition}\label{prop:hactstrivial}
Consider a cohomogeneity one manifold with principal isotropy $\Hs$ and singular isotropy $\K$. Then $\Hs\triangleleft\K$ if and only if $\Hs$ acts trivially on each slice $D$ of the singular orbit $S=\G/\K$ where $S$ intersects the fixed horizontal geodesic $\gamma$.
\end{proposition}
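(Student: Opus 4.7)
The plan is to reduce the statement to an algebraic computation on the normal sphere $\partial D$, exploiting the fact that the slice representation of $\K$ on $D$ is linear (in exponential coordinates) and that $\partial D$ is $\K$-equivariantly identified with $\K/\Hs$.

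First, I would fix the $\K$-equivariant identification $\partial D \cong \K/\Hs$ via the orbit map. Let $v \in \nu_{\gamma(1)} S$ be the unit normal vector to $S$ at $\gamma(1)$ pointing along $\gamma$ toward $\gamma(0)$; then $\K \cdot v$ is the full unit normal sphere, with isotropy exactly $\Hs$ at $v$, so $k\Hs \mapsto k \cdot v$ gives a $\K$-equivariant diffeomorphism $\K/\Hs \to S(\nu_{\gamma(1)} S)$, which via $\exp_{\gamma(1)}$ becomes $\K/\Hs \cong \partial D$. Under this identification, the slice representation of $\K$ on $\partial D$ corresponds to left multiplication on $\K/\Hs$.

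Next, I would characterize when $\Hs$ acts trivially on $\K/\Hs$ by left multiplication. An element $h \in \Hs$ fixes the coset $k\Hs$ iff $k^{-1} h k \in \Hs$. Therefore the $\Hs$-action on $\K/\Hs$ is trivial iff $k^{-1} \Hs k \subseteq \Hs$ for every $k \in \K$, which (applied also to $k^{-1}$) is equivalent to $\Hs \triangleleft \K$. Finally, because the slice representation on $D$ is linear, a subgroup of $\K$ acts trivially on $D$ iff it acts trivially on the unit sphere $\partial D$. Chaining these three equivalences yields the claim.

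This argument is essentially a bookkeeping exercise once the identifications are set up; the only subtlety is making sure that the $\Hs$-action on $\partial D$ coming from the slice representation is identified with \emph{left} multiplication on $\K/\Hs$ (as opposed to some conjugation-twisted action), so that the normalizer condition $k^{-1} h k \in \Hs$ emerges in the correct form. I would not anticipate any real obstacle beyond verifying this equivariance carefully.
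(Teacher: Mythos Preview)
Your proposal is correct and follows essentially the same approach as the paper. The paper phrases the key step in terms of isotropy groups---the isotropy at any point of $D\setminus\{p\}$ is a $\K$-conjugate of $\Hs$, so $\Hs$ fixes every such point iff $\Hs$ equals all its $\K$-conjugates---whereas you phrase the identical computation as left multiplication on $\K/\Hs$; both boil down to the condition $k^{-1}\Hs k\subseteq\Hs$ for all $k\in\K$.
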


\begin{proof}
Assume $\Hs\triangleleft\K$, and reparametrize $\gamma$ so that $\gamma(0)=p\in S$. As mentioned above, $\K$ acts transitively on the normal sphere $\partial D=\K/\Hs$ to $S$ with isotropy $\Hs$, so the $\K$-orbit of $\dot\gamma(0)$ contains all normal directions to $S$, i.e., all directions tangent to $D$. Thus, the isotropy of every point in $D\setminus\{p\}$ is a conjugate of $\Hs$ in $\K$, which hence must be equal to $\Hs$; and $\G_p=\K$ also contains $\Hs$. Hence $\Hs$ acts trivially on $D$. The converse statement follows similarly.
\end{proof}

An immediate consequence of $\Hs\triangleleft\K$ is that the quotient $\K/\Hs$ has a group structure. At the same time, $\K/\Hs$ is a sphere (the normal sphere to $\G/\K$ inside $M$). Therefore, $\K/\Hs$ must be diffeomorphic to $S^0=\Z_2$, $S^1$ or $S^3$, which are the only spheres to admit a group structure. The case $\K/\Hs=\Z_2$ can only occur when $\G/\K$ is an exceptional orbit (i.e., $\dim \G/\K=\dim \G/\Hs$) and is hence already excluded by another hypothesis. Thus, the codimension of the singular orbit $S=\G/\K$ must be equal to $2$ or $4$ under the assumption $\Hs\triangleleft\K$. Let us discuss a few converse statements, that can be used to check if $\Hs\triangleleft\K$ when $\K/\Hs=S^1$.

\begin{proposition}\label{prop:HnormalinK1}
Let $\K$ be a Lie group and let $\Hs\subset \K$ be a compact subgroup such that $\K/\Hs$ is homeomorphic to $S^1$. If there exists a subgroup $\mathsf N\triangleleft\K$, $\mathsf N\subset \Hs$, that intercepts all connected components of $\K$, then $\Hs\triangleleft\K$.
\end{proposition}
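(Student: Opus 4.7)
The plan is to first use the normal subgroup $\mathsf N$ to reduce to the case where $\K$ is connected, and then exploit the fact that a transitive action of a connected compact group on $S^1$ must factor through the abelian group $\SO(2)$, after which normality follows for free.

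First I would show that $\K/\mathsf N$ is connected. Since $\mathsf N$ meets every connected component of $\K$ and each such component maps continuously into a single component of $\K/\mathsf N$, that component necessarily contains the identity (the image of any element of $\mathsf N$); hence all of $\K/\mathsf N$ coincides with its identity component. By the third isomorphism theorem, $(\K/\mathsf N)/(\Hs/\mathsf N)\cong \K/\Hs$ is homeomorphic to $S^1$, and because the preimage of a normal subgroup under the projection $\K\to\K/\mathsf N$ is normal, it suffices to show that $\Hs/\mathsf N\triangleleft\K/\mathsf N$. We may therefore assume from the outset that $\K$ is connected (and still compact, since $\K/\Hs$ and $\Hs$ both are).

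Next, with $\K$ connected and compact, I would endow $\K/\Hs\cong S^1$ with a $\K$-invariant Riemannian metric, obtained by averaging any smooth metric against the Haar measure on $\K$. Then $\K/\Hs$ is isometric to a round circle and the left translation action of $\K$ factors through its isometry group, yielding a homomorphism $\rho\colon \K\to\Or(2)$. Connectedness of $\K$ forces $\rho(\K)\subset \SO(2)$, while transitivity of the $\K$-action, combined with the fact that every proper closed subgroup of $\SO(2)$ is finite, forces $\rho$ to be surjective onto $\SO(2)$.

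Finally, since $\Hs$ is the stabilizer of the identity coset in $\K/\Hs$, we have $\ker\rho\subset \Hs$. The quotient $\Hs/\ker\rho$ is then a closed subgroup of the abelian group $\K/\ker\rho\cong \SO(2)$, hence automatically normal in it. Pulling back along the projection $\K\to\K/\ker\rho$ gives $\Hs\triangleleft \K$, as desired. The only delicate step is the identification of the image of $\K$ in the diffeomorphism group of $S^1$ as precisely $\SO(2)$, but the averaging trick handles this cleanly, and I do not anticipate any serious obstacle beyond routine bookkeeping with isomorphism theorems.
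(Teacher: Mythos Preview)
Your argument is correct and essentially identical to the paper's: both pass to $\K/\mathsf N$ to reduce to a connected group acting on $S^1$, produce an invariant round metric to obtain a homomorphism into $\Or(2)$, land in $\SO(2)$ by connectedness, and read off normality of the isotropy. The only detail you leave implicit is that $\mathsf N$ should first be replaced by its closure (so that $\K/\mathsf N$ is a bona fide Lie group on which the averaging makes sense), which the paper notes explicitly; this is a trivial patch.
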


\begin{proof}
Replacing $\mathsf N$ by its closure, we can assume $\mathsf N$ closed. Then, $(\K/\mathsf N)/(\Hs/\mathsf N)\cong \K/\Hs\cong S^1$. Since $\Hs/\mathsf N$ is compact, the circle admits a $\K/\mathsf N$-invariant metric, which must be the round metric of some radius. Thus, the $\K/\mathsf N$-action is given by a homomorphism $\phi\colon\K/\mathsf N\to\mathsf O(2)$ whose image is contained in $\SO(2)$, since $\K/\mathsf N$ is connected. Thus, the action is free and $\ker\phi=\Hs/\mathsf N\triangleleft\K/\mathsf N$, hence $\Hs\triangleleft\K$.
\end{proof}

\begin{remark}
If $\K$ is connected and $\K/\Hs\cong S^1$, then setting $\mathsf N=\{1\}$ we automatically have $\Hs\triangleleft\K$.
Note that without the assumption on $\mathsf N$, the result in general fails. For instance, take $\K=\mathsf O(2)$ and $\Hs\cong\mathds Z_2\subset\mathsf O(2)$ the subgroup generated by some reflection. Then $\K/\Hs\cong S^1$, but $\Hs$ is not normal in $\K$.
\end{remark}

\begin{remark}
The above cannot be adapted to $\K/\Hs=S^3$. For instance, note that $\mathsf{SO}(4)/\mathsf{SO}(3)=\U(2)/\U(1)=S^3$, but $\mathsf{SO}(3)\ntriangleleft\mathsf{SO}(4)$ and $\U(1)\ntriangleleft\U(2)$.
\end{remark}

A different approach to the connectedness of a singular isotropy is by topological assumptions on the cohomogeneity one $\G$-manifold and the group $\G$, as follows.

\begin{corollary}\label{cor:normality1conn}
Let $M$ be a simply-connected manifold with a cohomogeneity one action by a connected Lie group $\G$, with $M/\G=[-1,1]$. Denote the nonprincipal orbits by $S_\pm=\G/\K_\pm$ and suppose $\operatorname{codim} S_-=2$ and $\operatorname{codim} S_+\geq 3$. Then $\Hs\triangleleft\K_-$.
\end{corollary}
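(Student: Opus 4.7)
The plan is to reduce the statement to the remark following Proposition~\ref{prop:HnormalinK1}: once we know $\K_-$ is connected and $\K_-/\Hs\cong S^1$, taking $\mathsf N=\{1\}$ immediately yields $\Hs\triangleleft\K_-$. The codimension hypothesis gives $\K_-/\Hs=S^1$ at once, so the whole game is to show that $\K_-$ is connected, which I would extract from the simple-connectedness of $M$ together with a $\pi_1$-computation governed by the group diagram.

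First I would analyze the homogeneous fibrations \eqref{eq:homfib} through their long exact sequences of homotopy groups. Since $\operatorname{codim} S_+\geq 3$ forces $\K_+/\Hs$ to be a sphere of dimension at least $2$, it is simply connected, and the sequence
\[
\pi_1(\K_+/\Hs)\longrightarrow \pi_1(\G/\Hs)\longrightarrow \pi_1(\G/\K_+)\longrightarrow \pi_0(\K_+/\Hs)
\]
shows that the projection $q_+\colon \G/\Hs\to \G/\K_+$ induces an isomorphism on $\pi_1$. (Connectedness of all the orbits is automatic from $\G$ being connected.) This is the key structural fact.

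Next I would apply the van Kampen theorem to the decomposition \eqref{eq:decomp}, namely $M=D(S_-)\cup_{\G/\Hs}D(S_+)$, after noting that each tube $D(S_\pm)$ deformation retracts onto $S_\pm=\G/\K_\pm$ and that the inclusion of the common boundary $\G/\Hs$ into $D(S_\pm)$ corresponds to the fibration $q_\pm$ of \eqref{eq:homfib}. Thus
\[
\pi_1(M)=\pi_1(\G/\K_-)*_{\pi_1(\G/\Hs)}\pi_1(\G/\K_+).
\]
Since the map $\pi_1(\G/\Hs)\to\pi_1(\G/\K_+)$ is an isomorphism by the previous step, a standard algebraic fact about amalgamated products collapses the right factor and gives $\pi_1(M)\cong\pi_1(\G/\K_-)$. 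Using that $M$ is simply connected, we conclude $\pi_1(\G/\K_-)=0$.

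Finally I would feed this into the long exact sequence of $\K_-\hookrightarrow\G\to\G/\K_-$:
\[
\pi_1(\G)\longrightarrow \pi_1(\G/\K_-)\longrightarrow \pi_0(\K_-)\longrightarrow \pi_0(\G)=0,
\]
which forces $\pi_0(\K_-)=0$, i.e.\ $\K_-$ is connected. Combined with $\K_-/\Hs\cong S^1$, the remark after Proposition~\ref{prop:HnormalinK1} gives $\Hs\triangleleft\K_-$, completing the proof. The only delicate point is the amalgamated-product collapse in the van Kampen step, but this is a routine general fact about pushouts and presents no real obstacle; the hypothesis $\operatorname{codim} S_+\geq 3$ is precisely what is needed to make it applicable.
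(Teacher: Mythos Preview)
Your proof is correct and follows the same strategy as the paper: show that $\K_-$ is connected and then invoke the remark after Proposition~\ref{prop:HnormalinK1} with $\mathsf N=\{1\}$. The paper obtains connectedness of $\K_-$ (together with the finer structure $\K_-=\Hs_0\cdot S^1$, $\Hs=\Hs_0\cdot\Z_k$) by citing \cite[Lemma~1.6]{gwz}, whereas you supply a self-contained argument via van Kampen and the homotopy exact sequences of the relevant fibrations, which is essentially a proof of the part of that cited lemma needed here.
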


\begin{proof}
From \cite[Lemma 1.6]{gwz}, $\K_-$ is connected, even more, $\K_-=\Hs_0\cdot S^1$ and $\Hs=\Hs_0\cdot\Z_k$, where $\Hs_0$ denotes the identity component of $\Hs$. Since $S_-=\K_-/\Hs$ is homeomorphic to $S^1$, Proposition~\ref{prop:HnormalinK1} applies.
\end{proof}

\begin{remark}\label{rem:so3s4}
The above result is optimal, in the sense that it fails if $\operatorname{codim} S_- =\operatorname{codim} S_+=2$. For example, consider the isometric $\mathsf{SO}(3)$-action on the round sphere $S^4$ with group diagram
$\Z_2\oplus\Z_2 \subset \{\mathsf{S(O(2)O(1))},\mathsf{S(O(1)O(2))}\}\subset \mathsf{SO}(3),$
where the embedding of $\Hs=\Z_2\oplus\Z_2$ in $\K_\pm$ is as $\mathsf{S(O(1)O(1)O(1))}$, see \cite[Table F]{gwz}. Both singular orbits $S_\pm$ are images of Veronese embeddings of $\R P^2$ in $S^4$, and have codimension $2$, but $\Hs$ is not normal in $\K_-$ or in $\K_+$.
\end{remark}

Finally, under the normality condition $\Hs\triangleleft\K$ we also have a geometric interpretation of the fibers $g\K\Hs$ of the homogeneous fibration $q\colon \G/\Hs\to \G/\K$ in terms of group orbits. Since $\Hs\triangleleft\K$, the right $\K$-action on $\G/\Hs$ given by
\begin{equation}\label{eq:krightaction}
\G/\Hs\times\K\ni (g\Hs,k)\longmapsto g\Hs k=gk\Hs \in \G/\Hs
\end{equation}
is well-defined. Moreover, the normal subgroup $\Hs$ of $\K$ is precisely the kernel of this action. Hence, taking the quotient, we get a free right action of the group $\K/\Hs$ on $\G/\Hs$, with the same orbits as \eqref{eq:krightaction}. We now extend this action on each principal orbit to an action on a tubular neighborhood of the singular orbit $S=\G/\K$.

\begin{lemma}\label{lemma:normalityB}
Let $M$ be a manifold with a cohomogeneity one $\G$-action with a singular orbit $S=\G/\K$ and $\Hs\triangleleft\K$. There is a smooth right action of $\K/\Hs$ on the tubular neighborhood $D(S)$, which fixes every point in $S$ and its other orbits are the fibers $g\K\Hs$ of the corresponding homogeneous fibration \eqref{eq:homfib}.
\end{lemma}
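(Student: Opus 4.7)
The plan is to use the Slice Theorem to identify the tubular neighborhood $D(S)$ $\G$-equivariantly with the associated bundle $\G\times_\K D$, where $D$ is a normal disk at a point $p\in S$ and $\K$ acts on $D$ via its slice representation. By Proposition~\ref{prop:hactstrivial}, the kernel of this representation contains $\Hs$, so it factors through the Lie group $\K/\Hs$, producing a linear left $\K/\Hs$-action on $D$ whose restriction to $\partial D$ is free and transitive. Fixing a base point $v_0\in\partial D$ corresponding to the identity coset then yields a $\K/\Hs$-equivariant identification $\partial D\cong\K/\Hs$ with left translation on the target.

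The core step is to produce a smooth right $\K/\Hs$-action on $D$ that commutes with the left action. On $\partial D=\K/\Hs$ this is simply right translation. To extend it smoothly to all of $D$, I would invoke the observation immediately preceding Proposition~\ref{prop:HnormalinK1}: the hypothesis $\Hs\triangleleft\K$ forces $\K/\Hs$ to be $S^1$ or $S^3$, so $D$ may be identified with a disk in $\C$ or $\Hr$ on which the $\K/\Hs$-action is by complex or quaternionic left multiplication; the sought right action is then ordinary right multiplication, which is $\R$-linear, smooth, and commutes with the left action by associativity. Given this, one defines
\begin{equation*}
[g,v]\cdot\bar k:=[g,\,v\cdot\bar k],\qquad\bar k\in\K/\Hs,
\end{equation*}
on $D(S)=\G\times_\K D$. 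Well-definedness follows from the left-right commutativity on $D$: the class of $(gk_0,(k_0^{-1}v)\cdot\bar k)=(gk_0,k_0^{-1}(v\cdot\bar k))$ equals that of $(g,v\cdot\bar k)$ in $\G\times_\K D$, and smoothness together with the action axioms then pass directly from $D$ to $\G\times_\K D$.

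Finally, I would read off the orbit structure. Since $0\in D$ is fixed by any linear action, every point $[g,0]\in S$ is fixed. For the base point $v_0\in\partial D$, the identity $v_0\cdot\bar k=kv_0$ (both sides representing $\bar k\in\K/\Hs=\partial D$) gives
\begin{equation*}
[g,v_0]\cdot\bar k=[g,kv_0]=[gk,v_0],
\end{equation*}
which under the identification $g\Hs\mapsto[g,v_0]$ of the principal orbit with $\G/\Hs$ reads $g\Hs\cdot\bar k=gk\Hs$, matching~\eqref{eq:krightaction}. Hence the orbit of $g\Hs$ is $g\K\Hs$, which is exactly the fiber~\eqref{eq:fibers} of the homogeneous fibration $q\colon\G/\Hs\to\G/\K$; the same conclusion holds at other radii by scaling $v_0\mapsto rv_0$ and using linearity.

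The principal obstacle is constructing the commuting right action on the slice $D$: the slice representation is a priori only a left action, and smoothly extending right translation from $\partial D$ to all of $D$ requires exactly the sphere-group structure on $\K/\Hs$ provided by the normality hypothesis $\Hs\triangleleft\K$. Once this is available, the rest of the argument is a formal consequence of the $\G\times_\K D$ construction.
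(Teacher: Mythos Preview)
Your proposal is correct and follows essentially the same approach as the paper. The paper constructs the right $\K/\Hs$-action on the slice more directly via the formula $(k\cdot\gamma(t))\cdot\bar k:=k\bar k\cdot\gamma(t)$ (using the geodesic parametrization of $D$ and Proposition~\ref{prop:hactstrivial} for well-definedness) and then extends to $D(S)=\G\cdot D$ by letting $\G$ act, without explicitly invoking the $\C/\Hr$ model; this is the same action you build, and your use of the associated bundle formalism together with the $S^1/S^3$ classification simply makes the smoothness at the origin and the match with~\eqref{eq:krightaction} more explicit.
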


\begin{proof}
Let $p\in S$ be a point where $S$ intersects the fixed horizontal geodesic $\gamma$. For convenience, reparametrize $\gamma$ so that $\gamma(0)=p$. Since the $\K$-action on the normal sphere to $S$ is transitive, the set
$D=\big\{k\cdot \gamma(t):k\in\K, |t|<\varepsilon\big\}$
is a slice of $S$ at $p$. Consider the right $\K$-action on $D$ given by
\begin{equation}\label{eq:kactiond}
 \big(k\cdot \gamma(t)\big)\cdot\overline k=k\overline k\cdot \gamma(t), \quad \mbox{ for all }  |t|<\varepsilon.
\end{equation}
By Proposition~\ref{prop:hactstrivial}, we have $k\cdot\gamma(t)=kh\cdot\gamma(t)$ for all $h\in \Hs$. Thus, since $\Hs\triangleleft\K$, the right action \eqref{eq:kactiond} is well-defined and smooth. Effectively, this is a right action of the quotient group $\K/\Hs$ on $D$. The corresponding orbits are clearly normal spheres to $S$ that foliate $D$, and $p\in S\cap D$ is a fixed point. Letting $\G$ act on the slice $D$ we immediately get an extension of this right $\K/\Hs$-action to the tubular neighborhood $D(S)=\G\cdot D$ with the desired properties.
\end{proof}

\begin{remark}
Let $M$ be a cohomogeneity one manifold with group diagram $\Hs\subset\{\K_-,\K_+\}\subset \G$, where $\Hs\triangleleft\K_-$. We note that the above $\K_-/\Hs$-action on the tubular neighborhood $D(S_-)$ of $S_-=\G/\K_-$ might not extend smoothly to the entire manifold $M$. It is clear that $\K_-/\Hs$ acts smoothly on $M\setminus S_+$, but the action extends smoothly across $S_+$ if and only if $\Hs$ is also normal in $\K_+$. Note that, in this case, the $\G$-action on $M$ cannot be primitive.
\end{remark}

\subsection{Collapse of principal orbits}\label{subsec:collapse}
Consider the family of principal orbits
\begin{equation}\label{eq:porbits}
x_t\colon \G/\Hs\hookrightarrow M, \quad x_t(\G/\Hs)=\G\cdot\gamma(t), \quad t\in \,]-1,1[.
\end{equation}
In order to describe the geometry of this family near a singular orbit, i.e., as $t$ goes to the boundary of $M/\G=[-1,1]$, it suffices to focus on one of the tubular neighborhoods $D(S_\pm)$ in  \eqref{eq:decomp}, or ``halfs", at a time. Denote this tubular neighborhood by $D(S)\cong\G\times_\K D$, the corresponding singular orbit by $S=\G/\K$, and reparameterize the orbit space so that the singular orbit $S$ corresponds to $t=0$, while nearby principal orbits correspond to $t>0$; i.e., $(\G\times_\K D)/G=[0,\varepsilon)$.

As previously mentioned, the homogeneous hypersurface $x_t(\G/\Hs)$ is a geodesic tube around $S$, i.e., consists of points at fixed distance\footnote{The distance between any point at $x_t(\G/\Hs)$ and $S$ is $t$ provided that the horizontal geodesic $\gamma(t)$ is parametrized by arc length.} $t$ from $S$. In other words, $x_t(\G/\Hs)$ is the total space of a sphere bundle
\begin{equation}\label{eq:sphbundle}
\K/\Hs\longrightarrow x_t(\G/\Hs)\stackrel{q}{\longrightarrow} S=\G/\K,
\end{equation}
where the fiber $\K/\Hs=\partial D$ is the normal sphere to $S$, and $q$ is the homogeneous fibration \eqref{eq:homfib}, modulo using $x_t$ as an identification. Because of the $\G$-equivariance of its mean curvature vector, each orbit $x_t(\G/\Hs)$ is automatically CMC. This family of CMC homogeneous hypersurfaces $x_t(\G/\Hs)$, $t>0$, foliates the tubular neighborhood $\G\times_\K D$ of $S$. As $t\to 0$, the hypersurfaces $x_t(\G/\Hs)$ get arbitrarily close to $S$ in the Hausdorff metric, as the normal sphere that is the fiber of \eqref{eq:sphbundle} collapses to a point. In this case, we say $x_t(\G/\Hs)$ \emph{collapses} to $S$, or \emph{condenses} on $S$. Moreover, since $S$ is an isolated singular orbit it is automatically a minimal submanifold \cite{book,hsiang,hl}. This means we are precisely in the situation discussed in \cite{mmp,mp}, where CMC hypersurfaces collapse, or condense, on a minimal submanifold $S$.

\begin{remark}
The above property that the \emph{limit} submanifold on which CMC hypersurfaces condense is minimal may fail to hold in general. Such property holds when the asymptotic behaviors of the mean curvature of these hypersurfaces and of the norm of their shape operator are comparable. In this way, the limit submanifolds can be proven to be minimal in cases much more general than cohomogeneity one, but using rather elaborate geometric measure theory arguments \cite[Thm 6.1]{mp}.
\end{remark}

It is not hard to see that the closer a CMC hypersurface is to a lower dimensional minimal submanifold (in the Hausdorff metric), the larger its mean curvature must be. In particular, the mean curvature $\H_t$ of $x_t(\G/\Hs)$ diverges to $+\infty$ as $t\to0$. Moreover, also the first derivative $\H_t'\to+\infty$ as $t\to0$. Since $\H_t$ is the trace of the shape operator $\mathcal S_t:=\mathcal S_{x_t}$, its Hilbert-Schmidt norm also blows up. In short,
\begin{equation}\label{eq:sxtblowsup}
\H_t \to+\infty,\; \H_t'\to+\infty\text{ and } \big\|\mathcal S_{t}\big\|\to+\infty, \; \text{ as } x_t(\G/\Hs) \text{ collapses to } \G/\K.
\end{equation}

Let us now go back to the \emph{global} setup of a cohomogeneity one manifold with $M/\G=[-1,1]$, where $t=\pm1$ correspond to the two singular orbits. In addition to those singular orbits, which are minimal submanifolds, there is also at least one minimal principal orbit. This follows from the fact that a $\G$-orbit of extremal volume among orbits of the same type is minimal, see \cite{book,hsiang,hl}. Thus, the mean curvature function $\H_t$ of \eqref{eq:porbits} blows up as $t\to\pm1$ but also attains the global minimum $\H_{t_0}=0$ for some $t_0\in\,]-1,1[$ which corresponds to an orbit $x_{t_0}(G/H)$ of maximal volume among principal orbits. This minimal principal orbit need not be unique.

\begin{remark}
By Frankel's Theorem, any two minimal hypersurfaces in an ambient manifold with $\Ric>0$ must intersect, hence $\Ric>0$ is a sufficient condition for uniqueness of the minimal principal orbit. A modified version \cite{petwil} of this result, or more generally the warped product splitting theorem
\cite{ck} gives the interesting consequence that if $\Ric\geq0$, then any two distinct minimal principal orbits $N_1$ and $N_2$ must be totally geodesic and the region between these hypersurfaces splits as a product $N_1\times [a,b]$, with boundary at $N_1\cong N_1\times \{a\}$ and $N_2\cong N_1\times \{b\}$. This means that, in this case, $\H_t=0$ for $t$ in an interval of positive measure in $]-1,1[$. As a side note, we recall that a cohomogeneity one manifold $M$ always admits an invariant metric with $\Ric\geq0$, and it admits an invariant metric with $\Ric>0$ if and only if $\pi_1(M)$ is finite \cite{gzricci}.
\end{remark}

\begin{remark}\label{rem:mcf}
The above statements about $\H_t$ correspond to statements regarding the dynamics of the \emph{Mean Curvature Flow} (MCF) of orbits on $M$, i.e., the $L^2$-gradient flow of the area functional. The MCF preserves the symmetries of $M$, i.e., an orbit of an isometric action evolves under the MCF through other orbits of the same type, as observed in \cite[Thm 6.1]{lt} and \cite[Thm 2]{pacini}, see also \cite{alex-rad}. In this way, the MCF of orbits on $M$ may be seen as a flow on $M/\G$ that preserves the orbit type stratification.
The limit of a nonminimal principal orbit evolving under the MCF is always a minimal orbit, which is singular if the MCF ends in finite time, and regular otherwise.
It is natural to expect that the bifurcation results in this paper generalize to bifurcation of CMC hypersurfaces issuing from paths of principal orbits that are solutions to the MCF collapsing to a minimal submanifold in finite time.
Although this approach is suggestive of a more general construction of Delaunay-type hypersurfaces, it is not clear whether a suitable counterpart of the partial symmetry preservation phenomenon would be available in this framework. We stress that this symmetry property is a crucial feature of our bifurcating solutions, that justifies calling such objects Delaunay-type hypersurfaces.
\end{remark}

\subsection{Adapted metrics}
A cohomogeneity one $\G$-manifold $M$ admits many Riemannian metrics $\g$ for which the $\G$-action is isometric. Any such $\G$-invariant metric $\g$ on $M$ is determined by its restriction to the (open and dense) subset $M_0=M\setminus S_\pm$ consisting of principal orbits. More precisely, using the fixed horizontal geodesic $\gamma(t)$, we can describe $\g$ as a one-parameter family of $\G$-invariant metrics $\g_t$ on $\G/\Hs$; i.e., using the equivariant diffeomorphism $M_0\cong \left]-1,1\right[\times \G/\Hs$,
\begin{equation}\label{eq:ggt}
\g=\dd t^2+\g_t, \quad -1<t<1,
\end{equation}
where $\g_t$ satisfies suitable smoothness conditions at $\pm1$. Note that $\g_t$ is the metric induced by $\g$ on the image of \eqref{eq:porbits}, i.e., $x_t\colon (\G/\Hs,\g_t)\hookrightarrow (M,\g)$ is an isometric embedding for each $-1<t<1$.
Basic material on cohomogeneity one metrics can be found in \cite{book,gzricci,gzannals}, and a method to determine the smoothness conditions at $\pm1$ necessary for \eqref{eq:ggt} to extend to a smooth metric on $M$ is given in \cite{vz}. 

Once more, we are interested in a property of a cohomogeneity one metric near one of the singular orbits $t=\pm1$. For this reason, as in Subsection \ref{subsec:collapse}, consider a tubular neighborhood $\G\times_\K D$ of a singular orbit $S=\G/\K$, and suppose $S$ corresponds to $t=0$ in the orbit space and nearby principal orbits $\G/\Hs$ to $t>0$. Denote by $\mathfrak h\subset\mathfrak k\subset\mathfrak g$ the Lie algebras of $\Hs\subset \K\subset\G$, respectively. We henceforth fix an $\Ad(\K)$-invariant complement $\mathfrak m$ to $\mathfrak k$ in $\mathfrak g$, and an $\Ad(\Hs)$-invariant complement $\mathfrak p$ to $\mathfrak h$ in $\mathfrak k$, i.e.,
\begin{equation*}
\mathfrak k\oplus\mathfrak m=\mathfrak g, \quad [\mathfrak k,\mathfrak m]\subset\mathfrak m \quad \mbox{ and } \quad \mathfrak h\oplus\mathfrak p=\mathfrak k, \quad [\mathfrak h,\mathfrak p]\subset\mathfrak p.
\end{equation*}
For example, $\mathfrak m$ and $\mathfrak p$ can be defined as the orthogonal complements of $\mathfrak k$ in $\mathfrak g$ and of $\mathfrak h$ in $\mathfrak k$ with respect to an auxiliary bi-invariant metric. There are natural identifications (via action fields) of $\mathfrak m$ and $\mathfrak p$ with tangent spaces to $\G/\K$ and $\K/\Hs$. Moreover, the sum
\begin{equation}\label{eq:n}
\mathfrak n:=\mathfrak m\oplus\mathfrak p
\end{equation}
is an $\Ad(\Hs)$-invariant complement to $\mathfrak h$ in $\mathfrak g$, which is identified with a tangent space to $\G/\Hs$. Recall that the set of $G$-invariant metrics on an abstract homogeneous space $G/H$ is naturally isomorphic to the set of $\Ad(H)$-invariant inner products on a given $\Ad(H)$-invariant complement of the Lie algebra of $H$, see e.g.~\cite{book}.
Thus, given a cohomogeneity one metric \eqref{eq:ggt}, the corresponding family $\g_t$ of metrics on $\G/\Hs$ can be identified with a family of $\Ad(\Hs)$-invariant inner products $\langle\cdot,\cdot\rangle_t$ on $\mathfrak n$.

\begin{definition}\label{def:adapted}
The cohomogeneity one metric metric \eqref{eq:ggt} is said to be \emph{adapted near $S$} if there exists a positive smooth function $\alpha\colon]0,\varepsilon[\,\to\R$ with $\lim_{t\to0}\alpha(t)=1$, such that the inner products $\langle\cdot,\cdot\rangle_t$ on $\mathfrak n$ that determine $\g_t$ are of the form
\begin{equation}\label{eq:adapted}
\langle\cdot,\cdot\rangle_t=\alpha^2(t)\,A(\cdot,\cdot) + B_t(\cdot,\cdot), \quad \mbox{ for } t \mbox{ close to } 0,
\end{equation}
where $A$ is the $\Ad(\K)$-invariant inner product on $\mathfrak m$ that induces the $\G$-invariant metric on $S=\G/\K$ inherited from $\g$ and $B_t$ is a $1$-parameter family of $\Ad(\Hs)$-invariant inner products on $\mathfrak p$. We also use the convention that if $S$ is a point, any smooth cohomogeneity one metric is adapted near $S$.
\end{definition}

The above condition on a cohomogeneity one metric is satisfied if and only if, near the singular orbit $S$, the corresponding $\G$-invariant metric $\g_t$ on $\G/\Hs$ is induced by an inner product on $\mathfrak n=\mathfrak m\oplus\mathfrak p$ that is not only $\Ad(\Hs)$-invariant on $\mathfrak m\oplus\mathfrak p$, but also turns the above into an orthogonal direct sum, is $\Ad(\K)$-invariant on $\mathfrak m$, and its component in $\mathfrak m$ is a homothety of the $\Ad(\K)$-invariant inner product that induces the invariant metric on $S$. For example, if $\mathfrak m$ and $\mathfrak p$ do not share equivalent $\Ad(\Hs)$-submodules, then all $\Ad(\Hs)$-invariant inner products $\langle\cdot,\cdot\rangle_t$ split as a sum of an $\Ad(\K)$-invariant inner product on $\mathfrak m$ and an $\Ad(\Hs)$-invariant inner product on $\mathfrak p$. In order to be adapted near $S$, one then needs that the metric component in $\mathfrak m$ is a homothety of the $\Ad(\K)$-invariant inner product that gives the metric on $S$. We stress that there are no obstructions to the existence of adapted metrics:

\begin{proposition}\label{prop:alwaysadapted}
Every cohomogeneity one $\G$-manifold $M$ with a singular orbit $S$ admits a metric that is adapted near $S$.
\end{proposition}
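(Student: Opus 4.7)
The plan is to build an adapted metric on a tubular neighborhood of $S$ from canonical ingredients, using the Slice Theorem, and then to extend it to a global $\G$-invariant metric on $M$ via a $\G$-invariant cutoff.

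First I would fix an $\Ad(\K)$-invariant inner product $A$ on $\mathfrak m$ inducing the desired $\G$-invariant metric on $S = \G/\K$, and choose a $\K$-invariant Euclidean inner product $\g_D$ on the normal disk $D$ (obtained by averaging any inner product against the compact group $\K$); its restriction to the unit sphere $\K/\Hs \subset D$ corresponds, via the action-field identification, to an $\Ad(\Hs)$-invariant inner product $B$ on $\mathfrak p$. Next I would assemble a metric $\g^{\mathrm{tube}}$ on $\G \times_\K D \cong D(S)$ as follows: the reductive splitting $\mathfrak g = \mathfrak k \oplus \mathfrak m$ defines a canonical $\G$-invariant principal connection on $\K \to \G \to \G/\K$, which induces a $\G$-invariant horizontal distribution on the associated bundle $\G \times_\K D$ complementary to the fibers. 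Declaring horizontal and vertical to be orthogonal, equipping the horizontal directions with the metric lifted from $(S, A)$, and the vertical fibers with $\g_D$, yields a smooth $\G$-invariant metric on the entire associated bundle, smooth across the zero section $S$ because it is built intrinsically from smooth vector-bundle data rather than via polar coordinates.

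To globalize, pick any $\G$-invariant background metric $\g_0$ on $M$ (by averaging) and a smooth $\G$-invariant cutoff $\chi \colon M \to [0,1]$ depending only on the orbit parameter $t$, equal to $1$ for $t < \varepsilon/3$ and vanishing for $t > 2\varepsilon/3$. Then $\g = \chi\, \g^{\mathrm{tube}} + (1-\chi)\, \g_0$ is a $\G$-invariant metric on $M$ coinciding with $\g^{\mathrm{tube}}$ in a smaller tube around $S$.

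Finally I would verify the adapted condition at $\gamma(t) = tv$ for small $t > 0$, where $v = \dot\gamma(0) \in D$ is the unit vector launching the horizontal geodesic (note that radial rays are $\g_D$-geodesics orthogonal to $S$ by construction). Tangent vectors to the principal orbit $\G \cdot \gamma(t)$ are realized as action fields of elements of $\mathfrak n = \mathfrak m \oplus \mathfrak p$. The $\mathfrak m$-action fields lie in the horizontal distribution by construction, so their squared norms equal $A$ independently of $t$; the $\mathfrak p$-action fields are tangent to the $\K$-orbit through $\gamma(t)$, which is the sphere of radius $t$ in $(D, \g_D)$, and linearity of the slice representation gives $X^*_{tv} = t \cdot X^*_v$ for $X \in \mathfrak p$, so their squared $\g_D$-length is $t^2 B(X,X)$. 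Orthogonality of the two distributions then gives $\g_t = A + t^2 B$ on $\mathfrak n$, i.e., \eqref{eq:adapted} with $\alpha(t) \equiv 1$ and $B_t = t^2 B$. The main technical point is the careful bookkeeping to confirm that the horizontal distribution from the principal connection is precisely swept by $\mathfrak m$-action fields at every $\gamma(t)$; this follows from $\G$-invariance of the connection and the functoriality of the associated bundle construction, so no essentially new ingredient is needed.
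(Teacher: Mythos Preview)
Your proof is correct and reaches the same strengthened conclusion as the paper---namely $\alpha \equiv 1$---but the route is genuinely different in how smoothness at $S$ is obtained. The paper works directly at the level of inner products on $\mathfrak n$: it takes a bi-invariant metric $Q$ on $\G$, sets $A = Q|_{\mathfrak m}$ and $B_t = \sum_j \beta_j(t)^2\, Q|_{\mathfrak p_j}$ for suitable functions $\beta_j$ vanishing at $0$, and then appeals to the smoothness criteria for cohomogeneity one metrics developed by Grove and Ziller to conclude that $\dd t^2 + \g_t$ extends smoothly across $t=0$. Your argument sidesteps that verification entirely by building $\g^{\mathrm{tube}}$ as a connection metric on the associated bundle $\G\times_\K D$: since the base metric $A$, the flat fiber metric $\g_D$, and the principal connection coming from the reductive splitting are all smooth vector-bundle data, smoothness at the zero section is automatic. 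The price is that you obtain only the special form $B_t = t^2 B$, whereas the paper's construction allows arbitrary $\beta_j$'s, but for the bare existence statement this is irrelevant. Your explicit globalization via a $\G$-invariant cutoff is a detail the paper leaves implicit.
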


\begin{proof}
A slightly stronger statement holds, namely, we can take the corresponding functions $\alpha$ to be constants. Let $Q$ be a bi-invariant metric on $\G$ and set, for $t$ near $0$, $\alpha=1$, $A:=Q|_{\mathfrak m}$ and $B_t:=\sum_j (\beta_j(t))^2 Q|_{\mathfrak p_j}$, where $\mathfrak p=\bigoplus_j \,\mathfrak p_j$ is a sum of mutually $Q$-orthogonal $\Ad(\Hs)$-invariant subspaces and $\beta_j$ are positive smooth functions that vanish at $0$. We claim that the corresponding family \eqref{eq:adapted} gives a smooth invariant metric $\g=\dd t^2+\g_t$ on $M$. This follows from verifying that all required smoothness conditions on $\alpha$ and $\beta_j$ at $t=0$ can be satisfied if $\alpha$ are chosen constant, see \cite{vz} or \cite[Sec 2]{gzricci} for details.
\end{proof}

As a direct consequence of the proof of the Soul Conjecture by Perelman~\cite{perelman}, we get a sufficient condition for an invariant metric to be adapted near both of its singular orbits.

\begin{proposition}\label{prop:soul}
Let $M$ be a cohomogeneity one manifold with an invariant metric $\g$ of nonnegative sectional curvature. If $(M,\g)$ has a totally geodesic principal orbit, then the metric $\g$ is adapted near any of its singular orbits.
\end{proposition}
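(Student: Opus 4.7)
The plan is to deduce the adapted structure near a singular orbit $S$ by realizing $S$ as the soul of an auxiliary complete noncompact nonnegatively curved manifold and then invoking Perelman's Theorem that the metric projection onto the soul is a Riemannian submersion.

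First, I would fix a singular orbit $S=\G/\K$, let $N=x_{t_*}(\G/\Hs)$ be the totally geodesic principal orbit, and let $M_-\subset M$ denote the closed region bounded by $N$ and containing $S$. Since $N$ is totally geodesic in $M$ and $\g$ has nonnegative sectional curvature, the induced metric on $N$ also has nonnegative sectional curvature by the Gauss equation. Thus the half-infinite cylinder $\bigl(N\times[0,\infty),\,\dd s^2+\g|_N\bigr)$ is nonnegatively curved, and I would glue it to $M_-$ along $N$ to obtain
\begin{equation*}
\tilde M := M_-\cup_N\bigl(N\times[0,\infty)\bigr).
\end{equation*}
The gluing is $C^{1,1}$ because the second fundamental forms of $N$ from both sides vanish; one can then smooth the metric in a neighborhood of $N$ without destroying the curvature bound, or appeal to Perelman's results in the Alexandrov category.

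Next, I would identify $S$ as the soul of $\tilde M$. For the ray $c(s)=(\gamma(t_*),s)$ going to infinity along the cylinder, the Busemann function satisfies $b_c(p)=s$ on the cylinder and $b_c(p)=-d(p,N)$ on $M_-$. Hence $b_c$ is maximized on $M_-$ precisely on $S$, the $\G$-orbit at maximal distance $t_*$ from $N$. Running the iterative Cheeger--Gromoll construction, the horoball intersection yields $M_-$, and the distance-to-boundary function $d(\cdot,N)$ on $M_-$ is maximized on $S$; since $S$ is a closed submanifold of $\tilde M$ with empty boundary, it is the soul.

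By Perelman's Theorem, the metric projection $\pi\colon\tilde M\to S$ is a Riemannian submersion. Restricted to $D(S)\subset M_-$, this coincides with the cohomogeneity one bundle projection $\G\times_\K D\to\G/\K$. Along the principal orbit $x_t(\G/\Hs)$ for small $t$, the Riemannian submersion property reads: the horizontal distribution (tangent to action fields of $\mathfrak m$) is orthogonal to the vertical distribution (tangent to $\dot\gamma$ and action fields of $\mathfrak p$), and $d\pi$ restricts to an isometry on the horizontal part. In the notation of Definition~\ref{def:adapted}, this produces the orthogonal splitting of $\langle\cdot,\cdot\rangle_t$ on $\mathfrak n=\mathfrak m\oplus\mathfrak p$ with $\langle\cdot,\cdot\rangle_t|_{\mathfrak m}=A$, in fact giving $\alpha(t)\equiv 1$ (stronger than required). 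The same argument applied on the other side of $N$ yields adaptedness near the remaining singular orbit. The main technical obstacle will be the limited regularity of $\tilde M$: the glued metric is only $C^{1,1}$ at $N$, while Perelman's Theorem is classically stated for smooth metrics, so one must either carry out a careful smoothing that preserves nonnegative curvature or invoke an Alexandrov / $C^{1,1}$ version of Perelman's result.
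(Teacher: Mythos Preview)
Your approach is essentially the same as the paper's: identify $S$ as the soul and invoke Perelman's theorem that the Sharafutdinov retraction onto the soul is a Riemannian submersion, whence the homogeneous fibration $q\colon x_t(\G/\Hs)\to S$ is a Riemannian submersion and $\alpha\equiv 1$.

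The one difference worth noting is that the paper does not perform your cylindrical gluing. After passing to a double cover if necessary so that $N$ disconnects $M$, the paper works directly with the compact region $C$ between $N$ and $S$: since $N$ is totally geodesic and $\g$ has nonnegative curvature, the distance function to $\partial C=N$ is concave, so $C$ is locally convex and the soul construction applies inside $C$ itself, with soul $S$. Perelman's argument (as cited) then gives that the retraction $C\to S$ is a Riemannian submersion. This sidesteps the $C^{1,1}$ regularity issue you correctly flag for the glued metric $\tilde M$; your version is not wrong, but the extra cylinder is unnecessary and only creates the smoothing problem you then have to talk yourself out of. You should also make explicit, as the paper does, the preliminary passage to a double cover so that $N$ actually bounds a region containing $S$.
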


\begin{proof}
Let $S$ be a singular orbit. Again, a stronger result holds, in that the corresponding function $\alpha$ of $\g$ is constant equal to $1$. Denote by $N$ the totally geodesic principal orbit, and assume, up to taking double coverings, that $N$ disconnects $M$. The component $C$ of $M\setminus N$ containing $S$ is a locally convex subset of $M$, since the distance function to its boundary $N$ is concave. The subset of $C$ of points at maximal distance from $N$ is precisely $S$, called the \emph{soul} of $C$. Due to Perelman~\cite[Thm (C)]{perelman}, the Sharafutdinov retraction onto the soul is a Riemannian submersion. In our case, the restriction of this retraction to a principal orbit $x_t(\G/\Hs)$ is precisely the projection $q\colon x_t(\G/\Hs)\to S$, see \eqref{eq:sphbundle}. Consequently, $\alpha$ is constant equal to $1$ in \eqref{eq:adapted} and hence $\g$ is adapted near $S$.
\end{proof}

\begin{example}
If both singular orbits of a cohomogeneity one manifold $M$ have codimension $2$, then $M$ admits an invariant metric with nonnegative sectional curvature and a totally geodesic principal orbit, see Grove and Ziller \cite[Thm E]{gzannals}. In particular, these Grove-Ziller metrics are adapted near both singular orbits.
\end{example}

\subsubsection{$\K/\Hs$-invariance}
One of the hypotheses of the Theorem in the Introduction is that the metric is $\K$-invariant near $S$. In the case $\Hs\triangleleft\K$, this means that the right $\K/\Hs$-action on $D(S)$ given by Lemma~\ref{lemma:normalityB} is isometric, i.e., the metrics $\g_t$ on $\G/\Hs$ must be $\K/\Hs$-invariant with respect to \eqref{eq:krightaction} for $t$ near the boundary point corresponding to $S$. Such invariance of $\g_t$ can be algebraically described as follows.

\begin{lemma}
An inner product $\langle\cdot,\cdot\rangle_t$ on $\mathfrak n=\mathfrak m\oplus\mathfrak p$ gives rise to a $\K/\Hs$-invariant metric $\g_t$ on $\G/\Hs$ if and only if $\langle\cdot,\cdot\rangle_t|_{\mathfrak m}$ is $\Ad(\K)$-invariant, $\langle\cdot,\cdot\rangle_t|_{\mathfrak p}$ is bi-invariant and $\mathfrak m$ and $\mathfrak p$ are $\langle\cdot,\cdot\rangle_t$-orthogonal.
\end{lemma}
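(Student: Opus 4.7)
The plan is to translate the $\K/\Hs$-invariance of $\g_t$ into an algebraic invariance of the inner product $\langle\cdot,\cdot\rangle_t$ at the origin $e\Hs\in\G/\Hs$, and then use $\Hs\triangleleft\K$ to separate that invariance along the decomposition $\mathfrak n=\mathfrak m\oplus\mathfrak p$.

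Since $\g_t$ is already left-$\G$-invariant and the right $\K$-action \eqref{eq:krightaction} commutes with left translations, it suffices to check that the right action is isometric at the single point $e\Hs$. Composing the right translation $R_k$ with the isometry $L_{k^{-1}}$ produces the map $gH\mapsto k^{-1}gkH$, which fixes $e\Hs$; differentiating the curve $\exp(sX)\Hs\mapsto\exp(s\Ad(k^{-1})X)\Hs$ shows that its differential at $e\Hs$ is $\phi(k):=\pi_{\mathfrak n}\circ\Ad(k^{-1})|_{\mathfrak n}$, where $\pi_{\mathfrak n}\colon\mathfrak g\to\mathfrak n$ is the projection along $\mathfrak h$. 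Hence $\K/\Hs$-invariance of $\g_t$ is equivalent to $\phi(k)$ preserving $\langle\cdot,\cdot\rangle_t$ for every $k\in\K$.

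The crucial observation is that $\Hs\triangleleft\K$ forces $\Ad(\K)\mathfrak h\subset\mathfrak h$. Combined with the $\Ad(\K)$-invariance of $\mathfrak m$, this makes $\phi(k)$ block diagonal with respect to $\mathfrak n=\mathfrak m\oplus\mathfrak p$: on $\mathfrak m$ it acts as $\Ad(k^{-1})|_{\mathfrak m}$, and on $\mathfrak p$ as $\pi_{\mathfrak p}\circ\Ad(k^{-1})|_{\mathfrak p}$, which under the identification $\mathfrak p\cong\mathfrak k/\mathfrak h$ is exactly the adjoint representation of the Lie group $\K/\Hs$ on its Lie algebra. Writing $\langle\cdot,\cdot\rangle_t$ as the sum of its $\mathfrak m$-restriction, its $\mathfrak p$-restriction, and a cross bilinear form $B\colon\mathfrak m\times\mathfrak p\to\R$, the block-diagonal structure of $\phi(k)$ means that $\phi$-invariance of $\langle\cdot,\cdot\rangle_t$ decouples into three independent invariance conditions, one on each piece.

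The $\mathfrak m$-piece yields $\Ad(\K)$-invariance of $\langle\cdot,\cdot\rangle_t|_{\mathfrak m}$, which is the first condition; the $\mathfrak p$-piece, through $\mathfrak p\cong\mathfrak k/\mathfrak h$, yields an $\Ad(\K/\Hs)$-invariant inner product on the Lie algebra of $\K/\Hs$, which is the bi-invariance of the second condition. The main obstacle is the cross piece: one must show that a $\K$-equivariant bilinear form $B\colon\mathfrak m\times\mathfrak p\to\R$, where $\K$ acts on $\mathfrak m$ by $\Ad$ and on $\mathfrak p$ by $\pi_{\mathfrak p}\Ad$, must vanish. Since these two $\K$-representations are of genuinely different type---the former is the isotropy representation at $S=\G/\K$, while the latter factors through the adjoint representation of $\K/\Hs$---a Schur/invariant-theoretic argument rules out any nonzero equivariant pairing and forces $B\equiv 0$, yielding the orthogonality condition. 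The converse direction is immediate: granted (a), (b), and (c), the block diagonal $\phi(k)$ preserves each diagonal block by (a) and (b) while the vanishing cross term is trivially preserved, so $\phi(k)^*\langle\cdot,\cdot\rangle_t=\langle\cdot,\cdot\rangle_t$ for all $k\in\K$.
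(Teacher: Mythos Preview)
The paper states this lemma without proof, so there is nothing to compare against; what follows addresses the correctness of your argument.

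Your reduction is sound: since $\g_t$ is left $\G$-invariant and right $\K$-translations commute with left translations, $\K/\Hs$-invariance is equivalent to invariance of $\langle\cdot,\cdot\rangle_t$ under $\phi(k)=\pi_{\mathfrak n}\circ\Ad(k^{-1})|_{\mathfrak n}$. The block-diagonal form of $\phi(k)$ (from $\Ad(\K)\mathfrak h\subset\mathfrak h$ and $\Ad(\K)\mathfrak m\subset\mathfrak m$) is correct, as is your identification of the two diagonal blocks with the conditions on $\langle\cdot,\cdot\rangle_t|_{\mathfrak m}$ and $\langle\cdot,\cdot\rangle_t|_{\mathfrak p}$. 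The ``if'' direction is indeed immediate.

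The gap is in your treatment of the cross term $B\colon\mathfrak m\times\mathfrak p\to\R$. You assert that the $\K$-modules $\mathfrak m$ (isotropy representation of $\G/\K$) and $\mathfrak p$ (factoring through $\Ad(\K/\Hs)$) are ``of genuinely different type'' and invoke Schur to force $B\equiv 0$. But nothing prevents these representations from sharing irreducible summands. The paper's own prototype (Subsection~\ref{subsec:clifftori}) already breaks this: for $\G=\mathsf T^2$, $\K=\mathsf S^1\times 1$, $\Hs=1$, both $\mathfrak m$ and $\mathfrak p$ are one-dimensional trivial $\K$-modules (since $\G$ is abelian and $\K/\Hs\cong S^1$ has trivial adjoint action), so nonzero $\K$-equivariant pairings obviously exist. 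Worse, in that example \emph{every} left-invariant metric on $\G/\Hs=T^2$ is automatically right $\K$-invariant, yet $\mathfrak m$ and $\mathfrak p$ need not be $\langle\cdot,\cdot\rangle_t$-orthogonal. So the Schur step fails precisely where the ``only if'' direction of the orthogonality claim would have to be established; either an extra hypothesis is tacitly in play in the paper (note that for adapted metrics the orthogonality $\mathfrak m\perp\mathfrak p$ is already assumed in \eqref{eq:adapted}), or the orthogonality clause should be read as part of the standing setup rather than as a consequence. In any case, your argument does not prove it.
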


\begin{remark}
Analogously to Proposition~\ref{prop:alwaysadapted}, it follows by using a bi-invariant metric on $\G$ that any cohomogeneity one manifold satisfying $\Hs\triangleleft\K$ can be endowed with a metric adapted near $S=\G/\K$ and $\K/\Hs$-invariant on $D(S)$.
\end{remark}

\begin{remark}\label{rem:totgeod}
In the above situation where there is an isometric $\K/\Hs$-action on $D(S)$ fixing $S$, it follows that the singular orbit $S$ is not only minimal but also totally geodesic. This means that if $S$ is not totally geodesic in $(M,\g)$, then either $\Hs\ntriangleleft\K$ or $\g$ is not $\K/\Hs$-invariant near $S$. For instance, in the case of the $\SO(3)$-action on $S^4$ described in Remark~\ref{rem:so3s4}, neither of the singular orbits (Veronese embeddings of $\R P^2$) are totally geodesic in $S^4$ with the round metric, and $\Hs\ntriangleleft\K_\pm$.
\end{remark}

\subsection{Spectrum of principal orbits}
\label{sub:spectrumorbits}
In this subsection, assume $(M,\g)$ is a cohomogeneity one manifold with a metric adapted near a singular orbit $S=\G/\K$. We are interested in comparing the spectra of the Laplacian of $S$ to that of a nearby principal orbit $(\G/\Hs,\g_t)$. Since the metric is adapted near $S$, the homogeneous fibration $q\colon (\G/\Hs,\g_t)\to S$ given by \eqref{eq:sphbundle} is a Riemannian submersion when the metric on $S$ is rescaled by $\alpha^2$. Moreover, since $\g_t$ splits as a sum of an $\Ad(\K)$-invariant inner product on $\mathfrak m$ and an $\Ad(\Hs)$-invariant inner product on $\mathfrak p$, it follows that this Riemannian submersion has totally geodesic fibers, see \cite[Thm 9.80]{besse}.

Denote by $\Delta_t$, respectively $\Delta_{S}$, the (unique self-adjoint extension of the) Laplacian acting on the space of $L^2$ functions on the principal orbit $(\G/\Hs,\g_t)$, respectively on the singular orbit $S$. For each function $\psi\colon S\to\R$, we denote by $\widetilde\psi\colon \G/\Hs\to\R$ its \emph{lift}, i.e., $\widetilde\psi:=\psi\circ q$. Since \eqref{eq:sphbundle} has totally geodesic fibers,
\begin{equation}\label{eq:laplasubm}
\Delta_t\,\widetilde\psi = (\Delta_{\alpha^2S} \,\psi)\circ q=\tfrac{1}{\alpha^2}(\Delta_{S} \,\psi)\circ q.
\end{equation}
In particular, if $\psi$ is an eigenfunction of $\Delta_S$ with eigenvalue $\lambda$, then its lift $\widetilde\psi$ is an eigenfunction of $\Delta_t$ with eigenvalue $\lambda/\alpha^2$ (and constant along the fibers). Therefore, there is a natural inclusion
\begin{equation}\label{eq:inclusion}
\tfrac{1}{\alpha^2}\spec\big(\Delta_S\big)\subset\spec\big(\Delta_t\big).
\end{equation}
Conversely, if $\psi\colon \G/\Hs\to\R$ is constant along the fibers and satisfies $\Delta_t\,\psi=\lambda\psi$, then there exists $\check\psi\colon S\to\R$ such that $\psi=\check\psi\circ q$ and $\Delta_{\alpha^2S}\,\check\psi=\lambda\check\psi$. Summing up, it follows from \eqref{eq:laplasubm}, after verifying the adequate regularity hypotheses, that the following holds:

\begin{proposition}\label{prop:eigenlift}
An eigenfunction of $\Delta_t$ is constant along the fibers of \eqref{eq:sphbundle} if and only if it is the lift of an eigenfunction of $\Delta_S$.
\end{proposition}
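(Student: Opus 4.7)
The plan is to formalize the two directions already sketched in the paragraphs preceding the statement, using the submersion formula \eqref{eq:laplasubm} together with elliptic regularity and the fact that $q\colon(\G/\Hs,\g_t)\to S$ is a smooth Riemannian submersion with totally geodesic fibers (a consequence of the metric being adapted near $S$).

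For the \emph{if} direction, suppose $\phi=\widetilde\psi=\psi\circ q$ is the lift of an eigenfunction $\psi$ of $\Delta_S$ with eigenvalue $\lambda$. By construction $\phi$ is constant along each fiber $q^{-1}(g\K)=g\K\Hs$, and substituting into \eqref{eq:laplasubm} gives $\Delta_t\widetilde\psi=(\lambda/\alpha^2)\widetilde\psi$; thus $\widetilde\psi$ is an eigenfunction of $\Delta_t$ that is (tautologically) constant on fibers, reproducing the inclusion \eqref{eq:inclusion}.

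For the \emph{only if} direction, let $\phi$ be an eigenfunction of $\Delta_t$ with eigenvalue $\mu$ that is constant along the fibers of $q$. Elliptic regularity ensures $\phi$ is smooth, and constancy on fibers yields a unique set-theoretic $\check\phi\colon S\to\R$ with $\phi=\check\phi\circ q$. To upgrade $\check\phi$ to a smooth function, fix $p\in S$ and choose a local smooth section $\sigma\colon U\to\G/\Hs$ of the submersion $q$ on a neighborhood $U$ of $p$; then $\check\phi|_U=\phi\circ\sigma$ is smooth, so $\check\phi\in C^\infty(S)$. With $\check\phi$ smooth, formula \eqref{eq:laplasubm} applied to $\check\phi$ gives $\mu\,\check\phi\circ q=\Delta_t\phi=(1/\alpha^2)(\Delta_S\check\phi)\circ q$, and surjectivity of $q$ forces $\Delta_S\check\phi=\mu\alpha^2\check\phi$, exhibiting $\check\phi$ as the sought eigenfunction of $\Delta_S$.

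The entire computational content is absorbed into \eqref{eq:laplasubm}, whose derivation already used that adaptedness makes the fibers of $q$ totally geodesic. Consequently there is no real obstacle to the proof: the only step requiring an argument beyond invoking \eqref{eq:laplasubm} is the smoothness of the descended function $\check\phi$, which is immediate from the existence of local sections of a smooth submersion.
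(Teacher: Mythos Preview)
Your proposal is correct and follows exactly the paper's approach: the paper's argument is contained in the paragraph immediately preceding the statement, which invokes \eqref{eq:laplasubm} for both directions and notes that one must verify ``the adequate regularity hypotheses''. Your write-up simply makes those regularity checks explicit (elliptic regularity for $\phi$, smoothness of $\check\phi$ via local sections of the submersion), which is precisely what the paper leaves to the reader.
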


Although the spectrum of $\Delta_t$ varies with $t$, certain eigenvalues remain of the form $\tfrac{\lambda}{\alpha^2}$, where $\lambda$ is a constant. These are precisely the eigenvalues that arise from the inclusion \eqref{eq:inclusion}. We call such eigenvalues \emph{basic eigenvalues} of $\Delta_t$.

\section{CMC Variational Formulation and Equivariant Bifurcation}\label{sec:variational}

Let $(M,\g)$ be a Riemannian manifold and $N\subset M$ a submanifold that bounds an open bounded subset $\Omega$ of $M$. It is a classical result that the embedding $x\colon N\hookrightarrow M$ has CMC if and only if it solves the following isoperimetric problem: the \emph{area} of $x(N)$ is critical among embeddings of $N$ into $M$, constrained to variations by embeddings whose image is the boundary of an open subset of $M$ with \emph{same volume} as $\Omega$. Equivalently, $x\colon N\hookrightarrow M$ has CMC if and only if it is a critical point of a Lagrange multiplier problem for area with a volume constraint. In this situation, the value of the Lagrange multiplier is the mean curvature.

\subsection{Variational formulation}
Consider the space $\Emb(N,M)$ of unparameterized\footnote{Since it is geometrically natural not to distinguish embeddings $N\hookrightarrow M$ obtained by right-composition by a diffeomorphism of $N$, we consider the quotient of the space of embeddings by the action of the diffeomorphism group of $N$. Elements of this quotient are called \emph{unparameterized} embeddings.} embeddings of class $C^{2,\alpha}$ of $N$ into $M$, and for $\H\in\R$, define
\begin{equation}\label{eq:functional}
\begin{aligned}
f_\H &\colon \Emb(N,M)\longrightarrow \R \\
f_\H(x)&=\mathrm{Area}(x)+\H\, \mathrm{Vol}(x),
\end{aligned}
\end{equation}
where $\mathrm{Area}(x)$ is the volume of the image $x(N)$ and $\mathrm{Vol}(x)$ is the volume of the open subset of $M$ whose boundary\footnote{In general, $x(N)$ might not be the boundary of an open subset, and under some extra hypotheses one can define a generalized notion of enclosed volume for such embeddings. Nevertheless, all embeddings considered in this paper are boundaries, hence we do not need to dwell on this issue.} is $x(N)$. Although $\Emb(N,M)$ does not have a natural global differentiable structure, around each \emph{smooth} embedding $x_0\colon N\hookrightarrow M$ it is easy to construct a local chart identifying nearby embeddings with sections of the normal bundle of $x_0$, via the exponential map of $M$. Since the codimension is one, assuming $x_0$ is transversely oriented (i.e., its normal bundle is oriented), this means we identify embeddings $x$ near $x_0$ with functions $\psi_x\in C^{2,\alpha}(N)$ in a neighborhood of the origin, so that (up to diffeomorphisms of $N$),
\begin{equation}
x(p)=\exp_{x_0(p)}\big(\psi_x(p) \cdot \vec n_{x_0} \big), \quad p\in N,
\end{equation}
where $\vec n_{x_0}$ is a unit normal vector field to $x_0(N)$. In this way, in a small chart around $x_0$, the functional $f_\H$ is smooth. As mentioned above, its critical points are embeddings (near $x_0$) with constant mean curvature equal to $\H$. For more details on the infinite-dimensional topological manifold structure of the space of unparameterized embeddings and regularity issues, see \cite{ap2}.

In the cohomogeneity one setup described in Section~\ref{sec:cohom1}, we have the family \eqref{eq:porbits} of  embeddings of principal orbits $x_t\colon \G/\Hs\hookrightarrow M$, whose mean curvature we denote $\H_t$, for each $t\in\,]-1,1[$. Notice that the images of $x_t$ are the common boundary of tubular neighborhoods $D(S_\pm)$ of the singular orbits $S_\pm$, see \eqref{eq:decomp}, so $\mathrm{Vol}(x_t)$ is the volume of one such tubular neighborhood, depending on the transverse orientation of $x_t(\G/\Hs)$. In this way, we encode the family \eqref{eq:porbits} as a $1$-parameter family of critical points $x_t\in\Emb(\G/\Hs,M)$ of the corresponding functionals $f_{\H_t}$, i.e.,
\begin{equation}\label{eq:critpath}
\dd f_{\H_t}(x_t)=0, \quad\mbox{ for all }t\in\,]-1,1[.
\end{equation}

\subsection{Second variation}
Under the above identifications, the second variation of $f_\H$ at a critical point $x$ is given by the following quadratic form on $C^{2,\alpha}(N)$
\begin{equation}\label{eq:secondvar}
\dd^2 f_\H(x)(\psi,\psi)=\int_N \psi\,\Delta_x \psi -\big(\Ric(\vec n_x)+\|\mathcal S_x\|^2\big)\psi^2,
\end{equation}
where $\Delta_x$ is the Laplacian of $(N,x^*\g)$, $\vec n_x$ is a unit normal vector field to $x(N)$ and $\|\mathcal S_x\|$ is the Hilbert-Schmidt norm of the shape operator of $x$. The above quadratic form is represented by the (formally) self-adjoint, linear elliptic differential operator
\begin{equation}\label{eq:jacobi}
J_x(\psi)=\Delta_x \psi -\big(\Ric(\vec n_x)+\|\mathcal S_x\|^2\big)\psi,
\end{equation}
called \emph{Jacobi operator}, or also \emph{stability operator}. A critical point $x$ is \emph{nondegenerate} (in the usual sense of Morse theory) if and only if $J_x$ has trivial kernel. Moreover, the \emph{Morse index} $i(x)$ of $x$ is the dimension of a maximal subspace where \eqref{eq:secondvar} is negative-definite, i.e., the number of positive eigenvalues of $\Delta_x$, counted with multiplicity, that are less than $\big(\Ric(\vec n_x)+\|\mathcal S_x\|^2\big)$.

\subsection{Bifurcation}
We now briefly discuss general bifurcation theory of general CMC embeddings, that will be applied to our cohomogeneity one setup. Let $(M,\g)$ be a Riemannian manifold and let
\begin{equation}\label{eq:xtabstract}
x_t\colon N\hookrightarrow M, \quad t\in [a,b],
\end{equation}
be a $C^1$ path of embeddings with CMC equal to $\H_t$.
Assume that $\H_t'\neq 0$ for all $t\in [a,b]$, cf. \eqref{eq:sxtblowsup} and \eqref{eq:critpath}, so that:
\begin{equation*}
t\mapsto \H_t \mbox{ admits an inverse } \H\mapsto t_\H.
\end{equation*}
In this situation, we reparameterize the family $f_{\H_t}$ using the parameter $t=t_\H$, thus writing $\dd f_t(x_t)=0$.

\begin{definition}\label{def:bif}
The instant $t_*\in [a,b]$ is a \emph{bifurcation instant} for the family $x_t$ of CMC embeddings if there exists a sequence $t_n\in [a,b]$ converging to $t_*$ and a sequence $y_n\in\Emb(N,M)$ converging to $x_{t_*}$, such that $y_n$ has CMC equal to that of $x_{t_n}$, i.e., $\dd f_{t_n}(y_n)=0$, but $y_n\neq x_{t_n}$.
\end{definition}

From the Implicit Function Theorem, see for instance \cite{bps2} or \cite[Prop 2.4]{ap}, \emph{degeneracy} of $x_{t_*}$ as a critical point of $f_{t_*}$ is a necessary condition for bifurcation. Nevertheless, it is in general not sufficient. A well-known sufficient criterion for bifurcation is a change in the Morse index $i(x_t)$ at the degeneracy instant $t_*$.

\begin{proposition}\label{prop:bifjumpmorseindex}
Let $(x_t)_{t\in[a,b]}$ be a $C^1$ family of embeddings of $N$ into $M$ having constant mean curvature $\H_t$, such that $x_t(N)=\partial\Omega_t$ is the boundary of a (bounded) open subset $\Omega_t\subset M$. Assume the following:
\begin{itemize}
\item[(i)] $\H_t'\ne0$ for all $t\in[a,b]$;
\item[(ii)] $x_a$ and $x_b$ are nondegenerate;
\item[(iii)] $i(x_a)\ne i(x_b)$.
\end{itemize}
Then, there exists a bifurcation instant $t_*\in \left]a,b\right[$ for the family $x_t$.
\end{proposition}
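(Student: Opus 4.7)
The plan is to reduce the statement to a classical bifurcation criterion for $C^1$ families of critical points of functionals on a Banach space, applied to the local variational setup for CMC embeddings described just before the proposition. First, I would use hypothesis (i) to reparameterize the family by the mean curvature value, so we obtain a $C^1$ path $\H \mapsto x_\H := x_{t_\H}$ of critical points of the unconstrained functionals $f_\H$ defined in \eqref{eq:functional}, with $\H$ ranging over the interval with endpoints $\H_a$ and $\H_b$. This is precisely the maneuver that converts the volume-constrained isoperimetric problem into a family of unconstrained variational problems to which abstract bifurcation theory applies.

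Next, I would localize along this path using the normal exponential charts described in the paper: around each embedding in the path, nearby unparameterized embeddings of $N$ are identified with functions $\psi\in C^{2,\alpha}(N)$ in a neighborhood of the origin, so that $\nabla f_\H$ becomes a smooth Banach space map from a neighborhood of the origin to $C^{0,\alpha}(N)$, vanishing along the trivial branch. Its linearization at $x_\H$ is, after identifying $C^{2,\alpha}(N)$ suitably with a subspace of $C^{0,\alpha}(N)$, the Jacobi operator $J_{x_\H}$ of \eqref{eq:jacobi}. The map $\H\mapsto J_{x_\H}$ is a continuous family of self-adjoint elliptic operators, hence a continuous family of Fredholm operators of index zero on the appropriate H\"older spaces, depending continuously on the parameter because $x_\H$ does.

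By hypothesis (ii), $J_{x_a}$ and $J_{x_b}$ are invertible; by hypothesis (iii), the numbers of negative eigenvalues of $J_{x_a}$ and $J_{x_b}$, which are by definition $i(x_a)$ and $i(x_b)$, differ. Hence the spectral flow of the path $\H\mapsto J_{x_\H}$ between its invertible endpoints is nonzero. At this point I would invoke the classical jump-of-Morse-index bifurcation criterion for $C^1$ families of critical points of $C^2$ functionals whose Hessian is Fredholm of index zero, in the form used in \cite{bps2} and \cite[Prop.~2.4]{ap}: a nonzero spectral flow between invertible endpoints forces the existence of a bifurcation instant $t_*\in\,]a,b[$ in the sense of Definition~\ref{def:bif}. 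Translating back from the $\H$-parameter to the $t$-parameter via the inverse of $t\mapsto \H_t$ yields the claim.

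The main obstacle to take care of is the interaction between the volume constraint, the reparameterization by $\H$, and the identification of the Hessian of $f_\H$ with the geometric Jacobi operator $J_{x_\H}$ on the correct function space. Once the reparameterization by $\H$ justified by (i) is in place, the abstract bifurcation criterion applies in its standard form and only routine checks of the Fredholm, self-adjointness, and continuity properties of the family $\H\mapsto J_{x_\H}$ remain.
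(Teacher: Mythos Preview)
Your proposal is correct and matches the paper's treatment: the paper does not give an independent argument but simply refers to the abstract jump-of-Morse-index criterion in \cite[Thm~II.7.3]{kielhofer} and to \cite[Sec.~5]{bps}, \cite[Sec.~2,3]{ap} for the details of how it is instantiated in the CMC setting. Your sketch of the reduction---reparameterize by $\H$ using (i), localize in normal exponential charts so that $\nabla f_\H$ becomes a Banach-space map with linearization $J_{x_\H}$, and then invoke the abstract criterion via the Fredholm self-adjointness of $J_{x_\H}$ together with (ii) and (iii)---is exactly the content of those references, so there is nothing to add.
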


An abstract formulation and proof of the above criterion can be found, e.g., in  \cite[Thm II.7.3]{kielhofer}. For more details on how this criterion applies in the context of CMC embeddings, see \cite[Sec~5]{bps} or \cite[Sec~2, 3]{ap}.

\subsection{Bifurcation with symmetries}
In the bifurcation setup considered in our applications, the family $(x_t)_{t\in[a,b]}$ is invariant under various symmetry groups that we now describe. First of all, $x_t(\G/\Hs)$ are principal $\G$-orbits in $M$. Secondly, $x_t(\G/\Hs)$ are also invariant under a smaller isometry group, that we denote $\GG$, depending on which normality assumption is considered:
\begin{itemize}
\item[$\K\triangleleft\G$ :] $x_t(\G/\Hs)$ is invariant under the subaction of $\GG=\K$, by Lemma~\ref{lemma:normalityA};
\item[$\Hs\triangleleft\K$ :] $x_t(\G/\Hs)$ is invariant under the right action \eqref{eq:krightaction} of $\GG=\K/\Hs$, by Lemma~\ref{lemma:normalityB}.
\end{itemize}
The crucial feature of these $\GG$-actions on $x_t(\G/\Hs)$ is that their orbits are exactly the fibers of the homogeneous sphere bundles $q\colon x_t(\G/\Hs)\to \G/\K$, see \eqref{eq:sphbundle}. Our Delaunay-type hypersurfaces on $M$ are obtained as bifurcating solutions of the CMC variational problem in the space of $\GG$-invariant embeddings of $\G/\Hs$ in $M$, hence also have this $\GG$-symmetry. Geometrically, $\GG$-invariant embeddings of $\G/\Hs$ in $M$ are tubular graphs of functions $\phi\colon \G/\K\to\R$, i.e., these hypersurfaces of $M$ are the union of normal spheres to $S=\G/\K$ of radius $\phi(p)$ around each $p\in\G/\K$. If $\phi\equiv t$ is a constant function, then the corresponding hypersurface is the geodesic tube $x_t(\G/\Hs)$ of points in $M$ at distance $t$ from $S$.

In order to adapt the above bifurcation criterion (Proposition~\ref{prop:bifjumpmorseindex}) to this context with symmetries, consider again an abstract framework, consisting of a Riemannian manifold $M$ with an isometric $\GG$-action and a $C^1$ path $x_t$ of CMC embeddings \eqref{eq:xtabstract} that are $\GG$-invariant. In other words, $x_t$ is a path in the fixed point set $\Emb(N,M)^\GG$ of the natural $\GG$-action in the space $\Emb(N,M)$.
Each $x_t$ induces a $\GG$-action on the source manifold $N$, defined by $g\cdot p=x_t^{-1}\big(g\cdot x_t(p)\big)$, for all $g\in \GG$ and $p\in N$. In turn, this also defines a representation of $\GG$ on the space of functions on $N$. Using $\GG$-invariance of the functional $f_{\H_t}$, it is easy to see that for all $t\in[a,b]$ and all eigenvalues $\mu$ of the Jacobi operator $J_t:=J_{x_t}$, we have a representation of $\GG$ on the $\mu$-eigenspace $E^\mu_t$ of $J_{t}$. Denote by $(E^\mu_t)^\GG\subset E^\mu_t$ the subspace consisting of functions that are fixed by $\GG$, and set:
\begin{equation}\label{eq:defHtK}
(E^-_t)^\GG:=\bigoplus_{\mu<0} (E_t^\mu)^\GG.
\end{equation}
We define the \emph{$\GG$-Morse index of $x_t$}, denoted by $i^\GG(x_t)$, as:
\begin{equation}
i^\GG(x_t):=\dim\, (E^-_t)^\GG.
\end{equation}
In this context, a $\GG$-invariant CMC embedding $x_t\colon N\hookrightarrow M$ is \emph{$\GG$-nondegenerate} if
\begin{equation}\label{eq:equivnondeg}
(E_t^0)^\GG=\{0\},
\end{equation}
i.e., if there are no nontrivial $\GG$-invariant functions in the kernel of the Jacobi operator. Just like before, equivariant degeneracy of $x_t$ is a necessary (but not sufficient) condition for bifurcation, while a change in the Morse index is sufficient.

\begin{proposition}\label{prop:symmpreserv}
Let $(x_t)_{t\in[a,b]}$ be a $C^1$ family of $\GG$-invariant embeddings of $N$ into $M$ having constant mean curvature $\H_t$, such that $x_t(N)=\partial\Omega_t$ is the boundary of a (bounded) open $\GG$-invariant subset $\Omega_t\subset M$. Assume the following:
\begin{itemize}
\item[(i)] $\H_t'\ne0$ for all $t\in[a,b]$;
\item[(ii$^\GG$)] $x_a$ and $x_b$ are $\GG$-nondegenerate;
\item[(iii$^\GG$)] $i^\GG(x_a)\ne i^\GG(x_b)$.
\end{itemize}
Then, there exists a bifurcation instant $t_*\in \left]a,b\right[$ for the family $x_t$. Moreover, the corresponding bifurcating CMC embeddings are $\GG$-invariant.
\end{proposition}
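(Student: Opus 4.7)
The plan is to reduce the statement to the non-equivariant criterion in Proposition~\ref{prop:bifjumpmorseindex} by means of Palais's Symmetric Criticality Principle. First, using hypothesis (i) and the Implicit Function Theorem exactly as in the unsymmetric setup, I would reparameterize by the mean curvature so that the critical point equation becomes $\dd f_t(x_t)=0$ for the $\GG$-invariant functionals $f_t$ defined in \eqref{eq:functional}. Since $\GG$ acts isometrically on $M$ and $\Omega_t$ is $\GG$-invariant, both $\mathrm{Area}$ and $\mathrm{Vol}$ (the latter being defined by the $\GG$-invariant domain $\Omega_t$) are $\GG$-invariant, hence so is each $f_t$. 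In the canonical chart around a smooth $\GG$-invariant embedding $x_t$, the $\GG$-action on $\Emb(N,M)$ corresponds to the induced action on normal sections, and $\Emb(N,M)^\GG$ is locally identified with a neighborhood of the origin in the Banach subspace $C^{2,\alpha}(N)^\GG$ of $\GG$-invariant functions.

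Next, by Palais's Symmetric Criticality Principle, a $\GG$-invariant embedding is a critical point of $f_t$ if and only if it is a critical point of the restriction $f_t|_{\Emb(N,M)^\GG}$. Thus the entire variational problem can be transported into the fixed-point set, where the family $x_t$ lies by assumption. The second variation of this restriction at $x_t$ is the quadratic form \eqref{eq:secondvar} restricted to $C^{2,\alpha}(N)^\GG$, which is represented by the restriction of the Jacobi operator $J_t$ to this subspace. The operator $J_t$ is $\GG$-equivariant, because all its constituents (the Laplacian, the Ricci tensor, the shape operator, and the unit normal field) are built from $\GG$-invariant geometric data; consequently $J_t$ preserves $C^{2,\alpha}(N)^\GG$ and its restriction is a self-adjoint elliptic operator whose spectrum consists precisely of those eigenvalues $\mu$ of $J_t$ for which $(E^\mu_t)^\GG\neq\{0\}$.

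With these identifications, the three hypotheses translate verbatim into the hypotheses of the classical criterion applied to the restricted family: (i) is unchanged; (ii$^\GG$) says the kernel $(E^0_t)^\GG$ of the restricted Jacobi operator vanishes at the endpoints, which is nondegeneracy; and by definition \eqref{eq:defHtK}, the Morse index of the restricted quadratic form equals $i^\GG(x_t)$, so (iii$^\GG$) is a jump of the restricted Morse index. Proposition~\ref{prop:bifjumpmorseindex}, applied inside the Banach submanifold $\Emb(N,M)^\GG$, then produces $t_*\in\,]a,b[$ and a sequence of critical points $x_n\in\Emb(N,M)^\GG$ of $f_{t_n}$ with $x_n\neq x_{t_n}$ and $x_n\to x_{t_*}$. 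By Palais's principle once more, each $x_n$ is a critical point of $f_{t_n}$ in the full space, i.e., a CMC embedding with the required mean curvature, and is automatically $\GG$-invariant because it lies in $\Emb(N,M)^\GG$.

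The main subtlety I expect is not any of the geometric steps but the functional-analytic justification that Proposition~\ref{prop:bifjumpmorseindex} applies to the restricted problem: one must know that the restriction of $J_t$ to the $\GG$-fixed Banach subspace is again a self-adjoint Fredholm operator with the same continuous spectral-flow behavior used in the proof of the unsymmetric criterion. This, however, is standard for restrictions of self-adjoint elliptic operators to the fixed point set of a compact group of isometries, since the $\GG$-representation on each eigenspace $E^\mu_t$ is finite-dimensional and the $\GG$-invariant part $(E^\mu_t)^\GG$ depends continuously on $t$. Once this is in place, the argument of \cite[Thm II.7.3]{kielhofer} transfers word for word.
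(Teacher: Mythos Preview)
Your proposal is correct and follows essentially the same approach as the paper: apply Palais's Symmetric Criticality Principle to reduce to the fixed-point set $\Emb(N,M)^\GG$, identify (ii$^\GG$) and (iii$^\GG$) with nondegeneracy and Morse index jump for the restricted problem, and invoke Proposition~\ref{prop:bifjumpmorseindex} there. Your write-up is in fact more detailed than the paper's terse proof, which leaves the functional-analytic points you flag entirely implicit.
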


\begin{proof}
The proof is obtained by applying Proposition~\ref{prop:bifjumpmorseindex} on the space of $\GG$-invariant embeddings $\Emb(N,M)^\GG$. By the Principle of Symmetric Criticality (see Palais \cite{palais}), critical points of the restriction of $f_t$ to $\Emb(N,M)^\GG$ are precisely the $\GG$-invariant embeddings of $N$ into $M$ with CMC equal to $t_\H$. Since isometries commute with the exponential map, it is easy to see that $\GG$-invariant functions on $N$ correspond to infinitesimal variations of $x_t$ through $\GG$-invariant embeddings. Assumption (ii$^\GG$) corresponds to nondegeneracy of $x_a$ and $x_b$ as critical points of the restrictions of $f_a$ and $f_b$ to $\Emb(N,M)^\GG$, while assumption (iii$^\GG$) corresponds to the change in the Morse index for the restriction of $f_t$ to $\Emb(N,M)^\GG$.
\end{proof}

\section{Main bifurcation result}\label{sec:mainpf}
We are now ready to prove our main result, that gives infinitely many bifurcation instants for the family of principal orbits on a cohomogeneity one manifold. 

\begin{theorem}\label{thm:main}
Let $M$ be a cohomogeneity one $\G$-manifold with principal isotropy $\Hs$. Let $S=\G/\K$ be a singular orbit that is not a fixed point and denote by $t_S$ the boundary point of $M/\G$ corresponding to $S$. Assume that the $\G$-invariant metric on $M$ is adapted near $S$ and that there exists a group $\GG$ that acts isometrically on $D(S)$, so that the $\GG$-orbits on a given principal orbit $x_t(\G/\Hs)$ are fibers of the sphere bundle \eqref{eq:sphbundle}.
Then, there is a converging sequence $t_q\to t_S$ of bifurcation instants for the family $x_t\colon \G/\Hs\hookrightarrow M$ of CMC embeddings of principal orbits. Moreover, partial symmetry preservation occurs at every $t_q$, in the sense that the corresponding bifurcating CMC embeddings are $\GG$-invariant but not $\G$-invariant.
\end{theorem}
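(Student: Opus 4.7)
My plan is to apply the equivariant bifurcation criterion (Proposition~\ref{prop:symmpreserv}) on a sequence of intervals $[a_q,b_q]$ accumulating at $t_S$, using a careful spectral analysis of the Jacobi operator restricted to $\GG$-invariant functions. Hypothesis (i) of that proposition ($\H_t'\ne 0$) holds on some one-sided interval $\left]t_S-\varepsilon,t_S\right[$ by \eqref{eq:sxtblowsup}, which in particular implies that $t\mapsto \H_t$ is strictly monotone, hence injective, there. All the remaining work is in verifying hypotheses (ii$^\GG$) and (iii$^\GG$) by understanding the $\GG$-Morse index $i^\GG(x_t)$ as $t\to t_S$.

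The key observation is that, by hypothesis (cf.\ Lemmas~\ref{lemma:normalityA} and \ref{lemma:normalityB}), $\GG$-orbits on $x_t(\G/\Hs)$ are precisely the fibers of the Riemannian submersion $q$ in \eqref{eq:sphbundle}, so a function on $x_t(\G/\Hs)$ is $\GG$-invariant if and only if it is constant along those fibers. Proposition~\ref{prop:eigenlift} then identifies the eigenvalues of $\Delta_t$ on the $\GG$-invariant sector as exactly the basic eigenvalues $\lambda_j/\alpha^2(t)$, where $\{\lambda_j\}$ is the spectrum of $\Delta_S$. Moreover, the zeroth-order term $V(t):=\Ric(\vec n_t)+\|\mathcal S_t\|^2$ of the Jacobi operator \eqref{eq:jacobi} is \emph{constant} on the homogeneous orbit $x_t(\G/\Hs)$ by $\G$-equivariance of $\vec n_t$ and of the shape operator. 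Hence the eigenvalues of $J_t$ on the $\GG$-invariant sector are $\lambda_j/\alpha^2(t)-V(t)$, so
$$i^\GG(x_t)=\#\big\{j:\lambda_j<\alpha^2(t)\,V(t)\big\},$$
counted with multiplicity, while $\GG$-nondegeneracy of $x_t$ amounts to $\alpha^2(t)V(t)\notin\spec(\Delta_S)$.

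From \eqref{eq:sxtblowsup}, $\|\mathcal S_t\|^2\to+\infty$, and since $\Ric$ is bounded on the compact manifold $M$, one gets $V(t)\to+\infty$; combined with $\alpha(t)\to 1$ this gives $\alpha^2(t)V(t)\to+\infty$. Since $\spec(\Delta_S)$ is discrete and unbounded, the set of $t$ for which $x_t$ is $\GG$-degenerate is closed with empty interior, and I can extract a sequence $a_1<b_1<a_2<b_2<\cdots\to t_S$ of $\GG$-nondegenerate instants with $i^\GG(x_{a_q})<i^\GG(x_{b_q})$. Applying Proposition~\ref{prop:symmpreserv} on each $[a_q,b_q]$ yields a bifurcation instant $t_q\in\left]a_q,b_q\right[$, so $t_q\to t_S$, with bifurcating embeddings automatically $\GG$-invariant. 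These cannot be $\G$-invariant: a $\G$-invariant embedding of $\G/\Hs$ close to $x_{t_q}$ must be some principal orbit $x_s$, and matching CMC values together with the injectivity of $\H_{(\cdot)}$ near $t_S$ would force $s=t_n$, contradicting $x_n\ne x_{t_n}$ in Definition~\ref{def:bif}.

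The main obstacle I expect is the clean reduction of the Jacobi operator $J_t$, on its $\GG$-invariant sector, to a diagonal operator controlled by $\spec(\Delta_S)$ alone. This reduction hinges on three features working in concert: adaptedness of the metric (which makes $q$ a Riemannian submersion with totally geodesic fibers, a prerequisite for Proposition~\ref{prop:eigenlift}); the geometric identification of $\GG$-orbits with the fibers of $q$; and the homogeneity of $x_t(\G/\Hs)$, which turns $V(t)$ into a constant potential. Without any one of these, the $\GG$-Morse index would not take the clean counting form above, and the spectral-flow argument driving the bifurcation criterion would break down.
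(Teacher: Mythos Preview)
Your proposal is correct and follows essentially the same route as the paper: both reduce the $\GG$-invariant sector of the Jacobi operator to $\Delta_S$ via Proposition~\ref{prop:eigenlift} (using that $\GG$-orbits are the fibers of $q$ and that the potential $V(t)=\rho(t)$ is constant by homogeneity), then use $\alpha^2(t)V(t)\to+\infty$ to produce nondegenerate instants with strictly increasing $\GG$-Morse index, and apply Proposition~\ref{prop:symmpreserv}. The only cosmetic differences are your notation $V(t)$ versus the paper's $\rho(t)$, and that the paper organizes the sequence extraction through the level sets $C_q=\{t:\rho(t)\alpha(t)^2=\lambda_q\}$ rather than your (slightly looser) appeal to the degenerate set being closed with empty interior---a claim which, incidentally, need not hold literally if $\alpha^2 V$ happens to be constant on a subinterval, but which is inessential since divergence to $+\infty$ already yields the required nondegenerate instants between consecutive eigenvalues.
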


\begin{proof}
Since $M$ is assumed compact and connected and the $\G$-action on $M$ has a singular orbit $S$, it follows that $M/\G$ has boundary and is hence a closed interval, see Subsection~\ref{subsec:topology}. Up to rescaling, assume\footnote{Along this proof, we normalize the closed interval $M/\G$ as $[0,1]$ instead of $[-1,1]$ in order to simplify notation, just like in the final subsections of Section~\ref{sec:cohom1}.} $M/\G=[0,1]$ and $t_S=0$. 

We apply Proposition~\ref{prop:symmpreserv} to the family \eqref{eq:porbits} of CMC embeddings $x_t\colon \G/\Hs\hookrightarrow M$ given by principal orbits. Every principal orbit $x_t(\G/\Hs)$ disconnects $M$ in two disjoint open bounded subsets \eqref{eq:decomp}. Thus, $x_t(\G/\Hs)=\partial \Omega_t$, where $\Omega_t$ is the component of $M\setminus x_t(\G/\Hs)$ containing $S$. Furthermore, denoting by $\H_t$ the mean curvature of $x_t$, it follows from \eqref{eq:sxtblowsup} that $\H_t'\ne0$, i.e., (i) is satisfied. To conclude the proof, we need to show the existence of decreasing sequences $a_q<b_q$ of positive real numbers, with $\lim_{q\to\infty}a_q=\lim_{q\to\infty}b_q=0$, for which assumptions (ii$^\GG$) and (iii$^\GG$) of Proposition~\ref{prop:symmpreserv} are satisfied. 

According to \eqref{eq:equivnondeg}, $t$ is a $\GG$-degeneracy instant if the kernel of the Jacobi operator $J_t$ contains nontrivial $\GG$-invariant functions on $\G/\Hs$. Observe that, by our hypothesis on the $\GG$-action on $\G/\Hs$, a function on $\G/\Hs$ is $\GG$-invariant precisely when it is the lift of a function on $S=\G/\K$ by the projection \eqref{eq:sphbundle}. Thus, from Proposition~\ref{prop:eigenlift}, $x_t$ is $\GG$-degenerate if and only if the constant\footnote{%
Notice that $\Ric(\vec n_{t})+\|\mathcal S_{t}\|^2$ is constant on $x_t(\G/\Hs)$ by homogeneity, for each $t$.}
\begin{equation}\label{eq:deginst}
\rho(t):=\Ric(\vec n_{t})+\|\mathcal S_{t}\|^2
\end{equation}
is a basic eigenvalue of $\Delta_t$. By \eqref{eq:inclusion}, this holds when $\rho(t)\alpha(t)^2\in\spec(\Delta_S)$.

Denote by $0<\lambda_1<\lambda_2<\ldots$ the eigenvalues of $\Delta_S$, counted with multiplicity. An embedding $x_t$ is $\GG$-nondegenerate if none of the $\lambda_i's$ are equal to $\rho(t)\alpha(t)^2$, and the $\GG$-Morse index $i^\GG(x_t)$ is equal to the number of $\lambda_i$'s, counted with multiplicity, that are less than $\rho(t)\alpha(t)^2$. Note that, in principle, not every $\GG$-degenerate instant produces a change in the Morse index, since the function $\rho(t)\alpha(t)^2$ may fail to be monotonic. Nevertheless, $\lim_{t\to0}\alpha(t)=1$ and $\lim_{t\to0}\rho(t)=+\infty$ by \eqref{eq:sxtblowsup}. As a result, the existence of $a_q$ and $b_q$ as above is obtained by elementary arguments, as follows.
Let $\tau>0$ be such that $\H_t'\neq0$, for all $0<t<\tau$, cf. \eqref{eq:sxtblowsup}.
 For $q\in\mathds N$, set
\begin{equation*}
C_q:=\big\{t\in\left]0,\tau\right[:\rho(t)\alpha(t)^2=\lambda_q\big\}.
\end{equation*}
Then:
\begin{itemize}
\item[(a)] for $q$ large enough, $C_q$ is a nonempty compact subset of $\left]0,\tau\right[$;
\item[(b)] $\max C_{q-1}<\min C_q\le\max C_q<\min C_{q+1}$, provided that $C_{q-1}\ne\emptyset$;
\item[(c)] the $C_q$'s accumulate at $t=0$ as $q\to\infty$, i.e., $\lim_{q\to\infty}\max C_q=0$.
\end{itemize}
Thus, for $q$ sufficiently large, choose
\[a_q\in\left]\max C_{q-1},\min C_q\right[ \quad\text{and}\quad b_q\in\left]\max C_q,\min C_{q+1}\right[.\]
It follows easily that $x_{a_q}$ and $x_{b_q}$ are $\GG$-nondegenerate, and $i^\GG(x_b)-i^\GG(x_a)>0$ is equal to the multiplicity of $\lambda_q$ as an eigenvalue of $\Delta_S$. Hence, Proposition~\ref{prop:symmpreserv} gives the existence of a bifurcation instant $t_q\in\left]a_q,b_q\right[$ and by (c), $\lim_{q\to\infty}t_q=0$. Break of $\G$-symmetry follows readily from the trivial observation that the only $\G$-invariant CMC hypersurfaces
in $M$ must be $\G$-orbits, all of which belong to the family $(x_t)_t$. Thus, every bifurcating branch consists of CMC embeddings that are $\GG$-invariant, but not $\G$-invariant.
\end{proof}

In view of Definition~\ref{def:bif} and Lemmas~\ref{lemma:normalityA} and \ref{lemma:normalityB}, applying the above result with $\GG=\K$ and $\GG=\K/\Hs$ proves the Theorem in the Introduction.

\section{Delaunay-type hypersurfaces in standard spheres}\label{sec:ex1}

The study of CMC hypersurfaces on spheres is a classic subject in differential geometry, that saw important developments in recent years, see e.g. \cite{ap,al,brendle,perdomo1,perdomo2}. A major contribution to the area was recently given by  Brendle~\cite{brendle} with a proof of the Lawson Conjecture, that states that the minimal Clifford torus
\begin{equation}\label{eq:cliffordtorus}
T_{\frac{\pi}{4}}:=\big\{(x_1,x_2,x_3,x_4)\in S^3\subset\R^4 : x_1^2+x_2^2=x_3^2+x_4^2=\tfrac12\big\}
\end{equation}
is the unique embedded minimal torus in $S^3$ endowed with the round metric, up to congruences. A glance at the above mentioned references reveals the crucial role played by symmetries. Shortly after the announcement of the proof of the Lawson Conjecture, Andrews and Li \cite{al} announced a full classification of embedded CMC tori in $S^3$, proving a conjecture of Pinkall and Sterling that asserts that any such embedding is rotationally symmetric, see also \cite{perdomo2}. This reduced the classification to a known case, see \cite{HyndParkMcCuan,perdomo1}.

The above minimal Clifford torus is an example of Delaunay surface. The classic Delaunay hypersurfaces in a round sphere $S^{n+1}$ are diffeomorphic to $S^k\times S^{n-k}$ and can be interpreted in terms of bifurcations from a family of orbits, as briefly described in \cite[\S 7]{pacard} and later explored in \cite{ap}. In this section, we first discuss these classical Delaunay hypersurfaces and reobtain some of them by using Theorem~\ref{thm:main}, and then explore the full generality of this result to produce \emph{new} Delaunay-type hypersurfaces on spheres, that are not diffeomorphic to the above.

\subsection{Clifford tori}\label{subsec:clifftori}
The minimal Clifford torus \eqref{eq:cliffordtorus} is an element of a $1$-parameter family of embedded CMC tori in $S^3$:
\begin{equation}\label{eq:cliffordtori}
T_{t}:=\big\{(x_1,x_2,x_3,x_4)\in S^3 : x_1^2+x_2^2=\cos^2 t, \, x_3^2+x_4^2=\sin^2 t \big\}, \quad 0<t<\tfrac{\pi}{2},
\end{equation}
which we call \emph{Clifford tori}. These tori are principal orbits of the standard action of the torus $\G=\mathsf T^2$ on $S^3\subset\C^2$, given by complex multiplication on each coordinate. This is a cohomogeneity one action with trivial principal isotropy $\Hs=1$ and singular isotropies $\K_-=1\times \mathsf S^1$ and $\K_+=\mathsf S^1\times 1$, corresponding to the endpoints of $S^3/\G=[0,\pi/2]$. Note the singular orbits $S_\pm=\G/\K_\pm$ are the geodesic circles to which $T_t$ collapse as $t\to 0$ or $t\to \pi/2$.
Since $\G=\mathsf T^2$ is abelian, both normality conditions are simultaneously satisfied. Also, it is easy to see that the round metric is adapted to both singular orbits $S_\pm$, with $\alpha(t)=\cos t$.
Thus, by the Theorem in the Introduction, we reobtain the existence of infinitely many rotationally symmetric CMC tori in $S^3$ not congruent to \eqref{eq:cliffordtori}.

The above conclusion matches the result announced by Andrews and Li \cite{al}, that \emph{every} embedded CMC torus in $S^3$ is rotationally invariant. The CMC tori that constitute the bifurcating branches of Delaunay surfaces are actually explicitly described in the classification results of \cite{HyndParkMcCuan,perdomo1}. Combining the results of \cite{al} and \cite{HyndParkMcCuan}, we get that such bifurcating CMC tori are of \emph{unduloid type} and have $\mathsf S^1\times\Z_m$ as isometry group, where the action of $\mathsf S^1\times\Z_m$ is exactly the restriction of the $\mathsf T^2$-action, i.e., the $\Z_m$ component acts by rotations on the directions orthogonal to the parallels.
Moreover, Perdomo~\cite{perdomo1} proved that for any $m\geq 2$, if $\H$ satisfies
\begin{equation}\label{eq:perdomos}
\cot\frac{\pi}{m}<\H<\frac{m^2-2}{2\sqrt{m^2-1}},
\end{equation}
then, there exists a compact embedded torus in $S^3$, different from any Clifford torus, with constant mean curvature $\H$ and isometry group $\mathsf S^1\times\Z_m$. Andrews and Li \cite{al} showed that there is \emph{at most one} such torus, up to congruences. These tori are exactly the bifurcating solutions reobtained above.

\begin{remark}
Combining the previous results, we have the remarkable consequence that the bifurcating solutions issuing from $x_{t_q^\pm}$ are increasingly symmetric as $q\to+\infty$. In fact, as $m\to+\infty$, the interval of $\H$'s that satisfy \eqref{eq:perdomos} goes to $+\infty$, which is the mean curvature of elements in bifurcating branches issuing from $x_{t_q^\pm}$ with $q\to+\infty$. This gives further insight on the partial symmetry preservation that takes place in this example; in that \emph{less} symmetry is lost at each bifurcating instant, as the principal orbits collapse to a singular one.
\end{remark}

\subsection{Higher dimensional Clifford tori}\label{subsec:highdimclifftori}
The cohomogeneity one action on $S^3$ with orbits \eqref{eq:cliffordtori} is a particular case of a cohomogeneity one action on  $S^{n+1}$, obtained as a \emph{sum action}. In other words, it is the restriction to the unit sphere of $\R^{n+2}$ of a representation with two irreducible factors with dimensions $k+1$ and ${n-k+1}$. The group diagram for such an action is
\begin{equation*}
\begin{gathered}
 \xymatrix@=4pt{& \SO(k+1)\SO(n-k+1) & \\
 & & \\
 \SO(k)\SO(n-k+1) \ar[uur] & &\SO(k+1)\SO(n-k) \ar[uul] \\
 & & \\
  & \SO(k)\SO(n-k)\ar[uur]\ar[uul] &}
\end{gathered}
\end{equation*}
where, again, the homomorphisms are the obvious block inclusions. Principal orbits of this action give a $1$-parameter family
\begin{equation}\label{eq:clifftori}
x_t\colon S^k\times S^{n-k}\to S^{n+1}, \quad 0<t<\tfrac\pi2,
\end{equation}
of CMC embeddings, that naturally generalizes \eqref{eq:cliffordtori}, see \cite[Ex.\ 6.50]{book}.

For this action, the only cases in which one of our normality assumptions is satisfied is when $k$ or $n-k$ is equal to $1$, corresponding respectively to $\K_-$ or $\K_+$ being normal in $\G$, giving principal orbits $\G/\Hs=S^1\times S^{n-1}$. Thus, our result implies existence of rotationally symmetric Delaunay-type hypersurfaces in $S^{n+1}$ diffeomorphic to $S^1\times S^{n-1}$ and not congruent to Clifford tori. These Delaunay-type hypersurfaces are part of larger classes discussed in Subsections~\ref{subsub:nonesssph} and \ref{subsec:kprod}.

Despite our result only applying in this subcase, it is known that for any choice of $k$ and $n$ there are infinitely many bifurcation values for the family \eqref{eq:clifftori} accumulating at both endpoints of $M/\G=[0,\pi/2]$. More precisely, there exist two converging sequences $t_q^\pm$ of bifurcation instants for the family \eqref{eq:clifftori},
\begin{equation*}
t_q^- = \arccos\sqrt{\frac{q(k+q+1)}{n-k+q(k+q+1)}}\quad \mbox{ and } \quad t_q^+ =\arccos\sqrt{\frac{k}{k+q(n-k+q+1)}},
\end{equation*}
where $t_q^-\to 0$ and $t_q^+\to\pi/2$. In particular, the Clifford tori $x_{t_q^\pm}$ are limits (in the Hausdorff distance) of pairwise noncongruent CMC embeddings of $S^k\times S^{n-k}$ in $S^{n+1}$, each of which is not congruent to any CMC Clifford torus. This was proved by  Al\'ias  and Piccione~\cite{ap}, through a direct computation of the Morse index showing that $i(x_t)\to+\infty$ as $t\to 0$ and $t\to\pi/2$, cf. Proposition~\ref{prop:bifjumpmorseindex}. A similar computation can be carried out in many cases where the normality assumptions fail, suggesting that our results should remain true under some weaker hypotheses.

\subsection{New Delaunay-type hypersurfaces}
Apart from Clifford tori, our results produce new families of Delaunay-type hypersurfaces on round spheres by using other cohomogeneity one actions.
These actions were classified by \cite{hl,straume}, see \cite[Tables E, F]{gwz} for a complete description. In order to simplify the classification scheme for such actions, one first considers \emph{essential} cohomogeneity one actions, i.e., actions so that no subaction is fixed point homogeneous and no normal subaction is orbit equivalent to it.

\subsubsection{Essential actions}
There are no essential cohomogeneity one actions on round spheres with a normal singular isotropy, and the only essential actions with $\Hs\triangleleft \K_-$ are given in the following table.

\begin{table}[htf]
\caption{Essential actions on spheres with $\Hs\triangleleft \K_-$.}\label{tab:essentialspheres}
\begin{tabular}{|ccccc|}
\hline
$M$ \rule{0pt}{2.5ex} \rule[-1.2ex]{0pt}{0pt} & $\G$ & $\K_-$ & $\K_+$ & $\Hs$ \\
\hline \hline
$S^{2k+3}$ \rule{0pt}{2.5ex} & $\mathsf{SO}(2)\mathsf{SO}(k+2)$ & $\Delta \mathsf{SO}(2)\mathsf{SO}(k)$ & $\mathds Z_2\cdot\mathsf{SO}(k+1)$ & $\mathds Z_2\cdot \mathsf{SO}(k)$ \\
$S^{15}$ & $\mathsf{SO}(2)\mathsf{Spin}(7)$ & $\Delta \mathsf{SO}(2)\mathsf{SU}(3)$ & $\mathds Z_2\cdot\mathsf{Spin}(6)$ & $\mathds Z_2\cdot\mathsf{SU}(3)$ \\
$S^{13}$\rule[-1.2ex]{0pt}{0pt} & $\mathsf{SO}(2)\mathsf G_2$ & $\Delta \mathsf{SO}(2)\mathsf{SU}(2)$ & $\mathds Z_2\cdot\mathsf{SU}(3)$ & $\mathds Z_2\cdot\mathsf{SU}(2)$ \\
\hline
\end{tabular}
\end{table}

\begin{remark}
In all tables we use the following notation conventions. If $G_1$ and $G_2$ are Lie groups, then $G_1G_2$ denotes their direct product, and $G_1\cdot G_2$ denotes a quotient of $G_1G_2$ by a finite central subgroup. Moreover, if $H$ is a subgroup of $G_1$ and of $G_2$, then $\Delta H$ denotes the diagonal embedding of $H$ in $G_1G_2$.
\end{remark}

In all the above actions on spheres, $\K_-$ is connected and $\K_-/\Hs=S^1$, so $\Hs\triangleleft\K_-$ by Corollary~\ref{cor:normality1conn}. Furthermore, 
the round metric is invariant under the corresponding right circle actions by $\GG=\K_-/\Hs$ in a neighborhood of $\G/\K_-$.\footnote{This can be directly verified for the case of $S^{2k+3}$, using that the $\SO(2)\SO(k+2)$-action comes from an exterior tensor product of representations of $\SO(2)$ in $\R^2$ and $\SO(k+2)$ in $\R^{k+2}$. Since the other two actions are given as certain subactions of this first case when $k=6$ and $k=5$, it follows immediately that also these $\K_-/\Hs$-actions are isometric. The moduli space of (adapted) metrics in the latter cases is of course strictly larger (since the isometry group is smaller) and existence of Delaunay-type hypersurfaces in $S^{15}$ and $S^{13}$ equipped with these metrics does not follow from the actions being subactions of $\SO(2)\SO(k+2)$ for appropriate $k$.} Thus, we get existence of rotationally symmetric Delaunay-type hypersurfaces whose double covering is a product of a circle and a unit tangent bundle of a lower dimensional sphere, namely $S^1\times T_1 S^{k+1}$, $S^1\times T_1 S^7$ and $S^1\times T_1 S^6$ corresponding to $S^{2k+3}$, $S^{15}$ and $S^{13}$, respectively. We stress that this \emph{rotational symmetry} of hypersurfaces in $S^n$ is not in the usual sense, i.e., there does not exist an axis in $\R^{n+1}$ with respect to which these hypersurfaces in $S^n\subset\R^{n+1}$ are rotationally symmetric. Instead, the rotational symmetry is with respect to the free isometric circle action on $S^n\setminus S_+$ described in Lemma~\ref{lemma:normalityB}, that does not extend to the whole $S^n$ in these cases.

\subsubsection{Nonessential actions}\label{subsub:nonesssph}
It is easy to check that the only nonessential cohomogeneity one actions on spheres that satisfy one of the normality conditions are sum actions coming from representations where one of the irreducible summands has dimension $2$ or $4$. More precisely, the possible actions are the $\SO(k+1)\SO(n-k+1)$-action described in Subsection~\ref{subsec:highdimclifftori} with either $k$ or $n-k$ equal to $1$, together with complex and quaternionic versions. In short, we have the following:

\begin{table}[htf]
\caption{Nonessential actions on spheres with $\Hs\triangleleft \K_-$.}\label{tab:nonessentialspheres}
\begin{tabular}{|ccccc|}
\hline
$M$ \rule{0pt}{2.5ex} \rule[-1.2ex]{0pt}{0pt} & $\G$ & $\K_-$ & $\K_+$ & $\Hs$ \\
\hline \hline
$S^{k+1}$ \rule{0pt}{2.5ex} & $\mathsf{SO}(2)\mathsf{SO}(k)$ & $\mathsf{SO}(2)\mathsf{SO}(k-1)$ & $\mathsf{SO}(k)$ & $\mathsf{SO}(k-1)$ \\
$S^{2k+1}$ & $\mathsf{U}(1)\mathsf{U}(k)$ & $\mathsf{U}(1)\mathsf{U}(k-1)$ & $\mathsf{U}(k)$ & $\mathsf{U}(k-1)$ \\
$S^{4k+3}$\rule[-1.2ex]{0pt}{0pt} & $\mathsf{Sp}(1)\Sp(k)$ & $\mathsf{Sp}(1)\mathsf{Sp}(k-1)$ & $\mathsf{Sp}(k)$ & $\mathsf{Sp}(k-1)$ \\
\hline
\end{tabular}
\end{table}
Thus, the corresponding Delaunay-type hypersurfaces are diffeomorphic to products of spheres $S^1\times S^{k-1}$, $S^1\times S^{2k-1}$ and $S^3\times S^{4k-1}$ corresponding to $S^{k+1}$, $S^{2k+1}$ and $S^{4k+3}$, respectively. These are diffeomorphic to previously mentioned higher dimensional version of Clifford tori in Subsection~\ref{subsec:highdimclifftori}, however the symmetry groups are different. In particular, this result produces Delaunay-type hypersurfaces on spheres equipped with different invariant metrics, not only the round one.

\section{Delaunay-type hypersurfaces in projective spaces}\label{sec:ex2}

Apart from constructing new topological types of Delaunay-type hypersurfaces in $S^{n+1}$, our methods also yield existence of Delaunay-type hypersurfaces in other compact rank one symmetric spaces. Cohomogeneity one actions on $\C P^n$, $\Hr P^n$ and $\Ca P^2$ are classified, and a detailed description of such actions and their isotropy groups can be found in \cite{gwz}.
We now describe which of these cohomogeneity one actions satisfy our hypotheses and hence produce Delaunay-type hypersurfaces.

We start by noting that no such actions occur on $\Ca P^2$, so we focus our attention to the projective spaces $\C P^n$ and $\Hr P^n$. If $\G$ acts with cohomogeneity one on $\C P^n$ (respectively $\Hr P^n$), then there exists a cohomogeneity one action of $\widetilde \G$ on $S^{2n+1}$ (respectively $S^{4n+3}$) and a normal subgroup $\mathsf N=\U(1)$ (respectively $\mathsf N=\Sp(1)$) of $\widetilde \G$ whose induced action is a Hopf action, so that $\widetilde \G/\mathsf N=\G$. This means that a full classification of the cohomogeneity one actions on $\C P^n$ and $\Hr P^n$ that satisfy our hypotheses can be directly obtained from the corresponding classification for actions on spheres, given in the previous section. Furthermore, this implies that the Fubini-Study metric in $\C P^n$ and $\Hr P^n$ is adapted with respect to such cohomogeneity one $\G$-actions, since the round metric is adapted with respect to the lifted cohomogeneity one $\widetilde\G$-actions.

\subsection{Essential actions}
Essential actions on projective spaces that satisfy one of the normality assumptions are obtained from Table~\ref{tab:essentialspheres}, resulting in the following:

\begin{table}[htf]
\caption{Essential actions on projective spaces with $\Hs\triangleleft \K_-$.}\label{tab:essproj}
\begin{tabular}{|ccccc|}
\hline
$M$ \rule{0pt}{2.5ex} \rule[-1.2ex]{0pt}{0pt} & $\G$ & $\K_-$ & $\K_+$ & $\Hs$ \\
\hline \hline
$\C P^{k+1}$ \rule{0pt}{2.5ex} & $\mathsf{SO}(k+2)$ & $\mathsf{SO}(2)\mathsf{SO}(k)$ & $\mathsf{O}(k+1)$ & $\mathds Z_2\cdot \mathsf{SO}(k)$ \\
$\C P^{7}$ & $\mathsf{Spin}(7)$ & $ \mathsf{SO}(2)\mathsf{SU}(3)$ & $\mathds Z_2\cdot\mathsf{Spin}(6)$ & $\mathds Z_2\cdot\mathsf{SU}(3)$ \\
$\C P^{6}$ \rule[-1.2ex]{0pt}{0pt}  & $\mathsf G_2$ & $\mathsf{U}(2)$ & $\mathds Z_2\cdot\mathsf{SU}(3)$ & $\mathds Z_2\cdot\mathsf{SU}(2)$ \\
\hline
\end{tabular}
\end{table}

From the above, we have existence of rotationally symmetric Delaunay-type hypersurfaces whose double covering is a unit tangent bundle of a lower dimensional sphere, namely $T_1 S^{k+1}$, $T_1 S^7$ and $T_1 S^6$ corresponding to $\C P^{k+1}$, $\C P^7$ and $\C P^6$, respectively. Again, rotational symmetry is to be interpreted as invariance under the isometric right $\K_-/\Hs$-action in $D(S_-)$.

\subsection{Nonessential actions}
Nonessential actions on $\C P^k$ and $\Hr P^k$ lift to sum actions on $S^{2k+1}$ and $S^{4k+3}$, respectively. It is easy to see that the only actions on projective spaces satisfying one of the normality conditions are those that lift to one of the actions in Table~\ref{tab:nonessentialspheres}. The latter commute with the Hopf actions that define the corresponding projective spaces, and the low dimensional irreducible summand is always the restriction of this Hopf action to that subspace. Consequently, in the quotient, the singular orbit corresponding to that subspace becomes a fixed point, while the other singular orbit becomes its cut locus, see \cite[Ex.\ 6.52, 6.54]{book}. For completeness, we list the corresponding groups in Table~\ref{tab:nonessproj}.

\begin{table}[htf]
\caption{Nonessential actions on projective spaces with $\Hs\triangleleft \K_-$.}\label{tab:nonessproj}
\begin{tabular}{|ccccc|}
\hline
$M$ \rule{0pt}{2.5ex} \rule[-1.2ex]{0pt}{0pt} & $\G$ & $\K_-$ & $\K_+$ & $\Hs$ \\
\hline \hline
$\C P^{k}$ \rule{0pt}{2.5ex} & $\mathsf{U}(k)$ & $\mathsf{U}(1)\mathsf{U}(k-1)$ & $\mathsf{U}(k)$ & $\mathsf{U}(k-1)$ \\
$\Hr P^{k}$ \rule[-1.2ex]{0pt}{0pt}  & $\Sp(k)$ & $\Sp(1)\Sp(k-1)$ & $\Sp(k)$ & $ \Sp(k-1)$ \\
\hline
\end{tabular}
\end{table}

\begin{example}\label{ex:cpn}
The action of $\G=\U(k)$ on $\C P^k$ lifts to a sum action of $\widetilde\G=\U(1)\U(k)$ on $S^{2k+1}$, i.e., the restriction to the sphere of a reducible representation $\C\oplus\C^k$. The singular orbits of $\widetilde \G$ on $S^{2k+1}$ are the unit spheres of $\C$ and $\C^k$, i.e., $S^1$ and $S^{2k-1}$. After projecting to $\C P^k$, these become the singular orbits of the $\U(k)$-action, which are hence a fixed point $S_+=\{p\}$ and its cut locus $S_-=\C P^{k-1}$. The principal $\U(k)$-orbits are distance spheres $S^{2k-1}$ centered at $p$, and the homogeneous fibration \eqref{eq:homfib} is the Hopf fibration $S^1\to S^{2k-1}\to\C P^{k-1}$. Metrically, these distance spheres are equipped with Berger metrics, the size of the Hopf fiber being proportional to the distance to $p$, i.e., the radius of the sphere.

In this context, it is also easy to describe the $\K_-/\Hs$-action on the neighborhood of $S_-$. Namely, if we consider homogeneous coordinates with $p=[1:0:\dots : 0]\in \C P^k$ and $S_-=\{[0:*:\dots:*]\}$, then $\K_-/\Hs=\U(1)$ acts by rotating the first coordinate. This action is clearly isometric in $D(S_-)$, fixes $S_-$ (which recovers the fact that $S_-=\C P^{k-1}$ is totally geodesic in $\C P^k$, cf. Remark~\ref{rem:totgeod}) and its orbits are normal circles to $S_-$, i.e., fibers of the Hopf fibration $S^1\to S^{2k-1}\to\C P^{k-1}$. This action does not extend smoothly to $p$, corresponding to $\Hs\ntriangleleft\K_+$. A geometric explanation for this is that the corresponding $\U(1)$-action field (which is a Killing field in $\C P^k$) has increasingly large norm as the distance to $S_-=\C P^{k-1}$ increases (i.e., as the distance to $S_+=\{p\}$ decreases). As a result, this Killing field on $\C P^k\setminus\{p\}$ cannot be extended to $p$, since this would only be possible if $p$ was a zero of this field.

In the above context, our result gives existence of Delaunay-type spheres $S^{2k-1}$ in $\C P^k$, that are rotationally symmetric with respect to the above $\U(1)$-action. Analogously, one gets Delaunay-type spheres $S^{4n-1}$ in $\Hr P^k$ invariant under $\Sp(1)$.
\end{example}

\begin{remark}
A natural concern is if the Delaunay-type hypersurfaces bifurcating from distance spheres centered at $p\in\C P^k$ are legitimally new CMC hypersurfaces, or if they could be obtained via other isometries of $\C P^k$. For instance, there could be a sequence of points $p_n\in\C P^k$, whose distance spheres of appropriate radii accumulate on distance spheres centered at $p$. Note that, in particular, this would force the sequence $\{p_n\}$ to converge to $p$. This situation can be easily excluded using the $\K_-/\Hs$-invariance of the bifurcation solutions. If there were such a sequence $\{p_n\}$ with $\K_-/\Hs$-invariant distance spheres, then $p_n$ would be fixed by $\K_-/\Hs$. However, the only fixed points of $\K_-/\Hs$ in $D(S_-)$ lie in $S_-$, hence $\{p_n\}$ are at a bounded distance away from $p$ and cannot possibly converge to $p$.
\end{remark}

\section{Delaunay-type hypersurfaces in Kervaire spheres}\label{sec:ex3}

Consider the Brieskhorn variety $M^{2n-1}_d\subset \mathds C^{n+1}$ defined by the equations
\begin{equation*}
\begin{cases}
z_0^d+z_1^2+\cdots+z_n^2=0, \\
|z_0|^2+|z_1|^2+\cdots +|z_n|^2=1.
\end{cases}
\end{equation*}
When $n$ and $d$ are odd, $M^{2n-1}_d$ is homeomorphic to the sphere $S^{2n-1}$. Nevertheless, if $2n-1\equiv 1 \mod 8$, then $M^{2n-1}_d$ is not diffeomorphic to $S^{2n-1}$, and such manifolds are called \emph{Kervaire exotic spheres}.

According to \cite{gvwz}, it was first observed by Calabi in dimension $5$ and later in \cite{hh} that $M^{2n-1}_d$ carries a cohomogeneity one action of $\SO(2)\SO(n)$, given by $(e^{i\theta},A)\cdot(z_0,\dots,z_n)=\big(e^{2i\theta}z_0,A(z_1,\dots,z_n)\big)$. The group diagram for such action is
\begin{equation}\label{eq:brieskhorn}
\begin{gathered}
 \xymatrix@=4pt{& \SO(2)\SO(n) & \\
 & & \\
\SO(2)\SO(n-2) \ar[uur] & &\mathsf O(n-1) \ar[uul] \\
 & & \\
  & \Z_2\SO(n-2)\ar[uur]\ar[uul] &}
\end{gathered}
\end{equation}
and it can be easily seen that $\Hs\triangleleft\K_-$ from the embedding\footnote{Alternatively, one can also see this as a consequence of Corollary~\ref{cor:normality1conn} provided $n\geq 4$.} of $\Hs$ into $\K_-$. Since there is no canonical invariant metric on $M^{2n-1}_d$, simply choose any metric that is adapted near $S_-$, see Proposition~\ref{prop:alwaysadapted}. Then, our result implies existence of infinitely many Delaunay-type hypersurfaces on $M^{2n-1}_d$, near $S_-$, diffeomorphic (up to double covering) to $S^1\times T_1 S^{n-1}$ and that are invariant under a free right isometric circle action. In particular, this provides examples of Delaunay-type rotationally symmetric hypersurfaces in Kervaire exotic spheres.

\section{Delaunay-type hypersurfaces in low dimensions}\label{sec:ex4}

Cohomogeneity one actions on simply-connected compact manifolds have been classified up to dimension $7$ by Hoelscher~\cite{hoelschert,hoelscher}. 
We now analyze this classification, verifying in which cases one of the normality assumptions is satisfied. In all such cases, Delaunay-type hypersurfaces are then obtained for every choice of adapted metric. This provides an exhaustive list of Delaunay-type hypersurfaces in low dimensional manifolds that can be obtained from our result.

It is easy to see that the only cohomogeneity one action on a compact simply-connected $2$-dimensional manifold is the rotation action on $S^2$, all of whose singular orbits are fixed points. Thus, we start our analysis in dimension $3$.

\subsection{3D examples}
If $\dim M=3$, then the possible groups $\G$ that act with cohomogeneity one on $M$ have dimension $2$ or $3$. If $\dim \G=2$, then $\G$ must be a torus acting on $S^3$ as described in Subsection~\ref{subsec:clifftori}. In this case, as mentioned above, we reobtain the classical rotationally symmetric Delaunay-type tori in $S^3$. If $\dim \G=3$, then $\G=\SU(2)$ (up to covering) and the action is a rotation action on $S^3$, so both singular orbits are fixed points.

\subsection{4D examples}
If $\dim M=4$, then the possible groups $\G$ that act with cohomogeneity one on $M$ have dimensions ranging from $3$ to $6$. An analysis of which actions satisfy our normality assumptions is given in Table~\ref{tab:hoelscher}.

\begin{table}[htf]
\caption{Cohomogeneity one actions on simply-connected $4$-manifolds with $\Hs\triangleleft\K_-$, extracted from \cite{hoelschert,parker}.}\label{tab:hoelscher}
\begin{tabular}{|ccccc|}
\hline
$M$ \rule{0pt}{2.5ex} \rule[-1.2ex]{0pt}{0pt} & $\G$ & $\K_-$ & $\K_+$ & $\Hs$ \\
\hline \hline
$S^2\times S^2$ \rule{0pt}{2.5ex} & $\mathsf S^3\times\mathsf S^1$ & $\mathsf S^1\times\mathsf S^1$ & $\mathsf S^1\times\mathsf S^1$  & $\mathsf S^1\times 1$ \\
$S^4$ & $\mathsf S^3\times\mathsf S^1$ & $\mathsf S^1\times\mathsf S^1$ & $\mathsf S^3\times\mathsf S^1$  & $\mathsf S^1\times 1$ \\
$S^2\times S^2$ & $\mathsf S^3$ & $\mathsf S^1$ & $\mathsf S^1$  & $\Z_{2n}$ \\
$\C P^2\# \overline{\C P}^2$ & $\mathsf S^3$ & $\mathsf S^1$ & $\mathsf S^1$  & $\Z_{2n+1}$ \\
$\C P^2$  \rule[-1.2ex]{0pt}{0pt} & $\mathsf S^3$ & $\{e^{i\theta}\}$ & $\{\{e^{j\theta}\}\cup\{ie^{j\theta}\}\}$ & $\langle i \rangle$ \\
\hline
\end{tabular}
\end{table}

In all the cases, the Theorem in the Introduction applies near $S_-=\G/\K_-$ and, in some cases, it also applies near $S_+=\G/\K_+$, e.g., when both $\K_\pm$ are abelian. In this way, we get Delaunay-type hypersurfaces in $S^4$, $\C P^2$, $S^2\times S^2$, and $\C P^2\# \overline{\C P}^2$, according to Table~\ref{tab:hoelscher}. There are also many cases of $4$-dimensional non-simply-connected cohomogeneity one manifolds to which our results apply. This general classification has been carried out by Parker~\cite{parker}, and one can easily identify many cases where our normality assumptions are satisfied.

\subsection{5D, 6D, and 7D examples}
In order to describe these examples, we say that a cohomogeneity one action is \emph{reducible} if there is a proper normal subgroup of $\G$ that acts by cohomogeneity one with the same orbits. According to Hoelscher~\cite{hoelscher}, the nonreducible cohomogeneity one action on a compact connected manifold of dimension $5$, $6$ or $7$ by a compact connected group are:
\begin{itemize}
\item isometric actions on a symmetric space;
\item product actions;
\item $\SO(2)\SO(n)$-actions on the Brieskhorn varieties $M_d^{2n-1}$ given by \eqref{eq:brieskhorn};
\item actions listed in \cite[Table I, II]{hoelscher}.
\end{itemize}
Many of the above possibilities already appeared in previous sections, e.g., the case of $M^{2n-1}_d$. Regarding the exceptional actions found in \cite[Table I, II]{hoelscher}, we list the primitive actions that satisfy our normality conditions in Table~\ref{tab:hoelscherthm}.

\begin{table}[htf]
\caption{Primitive actions in dimensions $5$, $6$ and $7$ with $\Hs\triangleleft\K_-$, extracted from \cite[Table I]{hoelscher}.}\label{tab:hoelscherthm}
\begin{tabular}{|ccccc|}
\hline
$M$ \rule{0pt}{2.5ex} \rule[-1.2ex]{0pt}{0pt} & $\G$ & $\K_-$ & $\K_+$ & $\Hs$ \\
\hline \hline
$M^5_d$ \rule{0pt}{2.5ex} & $\mathsf S^3\times\mathsf S^1$ & $\{(e^{jp\theta},e^{i\theta})\}$ & $\{(e^{i\theta},1)\}\cdot \Hs$  & $\langle(j,i)\rangle$ \\
\multicolumn{5}{|r|}{where $p\equiv1\mod4$} \\ \hline
$M^7_{6c}$  \rule{0pt}{2.5ex} & $\mathsf S^3\times\mathsf S^3$ & $\{(e^{ip_-\theta},e^{iq_-\theta})\}$ & $\{(e^{jp_+\theta},e^{jq_+\theta})\}\cdot \Hs$ & $\langle(i,i)\rangle$ \\
\multicolumn{5}{|r|}{where $p_-,q_-\equiv1\mod4$} \\ \hline
$M^7_{6d}$  \rule{0pt}{2.5ex} & $\mathsf S^3\times\mathsf S^3$ & $\{(e^{ip_-\theta},e^{iq_-\theta})\}\cdot \Hs$ & $\{(e^{jp_+\theta},e^{jq_+\theta})\}\cdot \Hs$ & $\langle(i,i),(1,-1)\rangle$ \\
\multicolumn{5}{|r|}{where $p_-,q_-\equiv1\mod4$, $p_+$ even} \\ \hline
$M^7_{6g}$  \rule{0pt}{2.5ex} \rule[-1.2ex]{0pt}{0pt} & $\mathsf S^3\times\mathsf S^3$ & $\{(e^{ip\theta},e^{iq\theta})\}$ & $\Delta S^3\cdot \Z_n$ & $\Z_n$ \\
\multicolumn{5}{|r|}{where $n=2$ and $p$ or $q$ even, or $n=1$ and $p,q$ arbitrary} \\
\hline
\end{tabular}
\end{table}

Our result applies to all such examples, when endowed with adapted metrics. On all these actions, $\Hs\triangleleft\K_-$ because either $\K_-$ is abelian or due to Proposition~\ref{prop:HnormalinK1}. Moreover, we remark that in the last example $M^7_{6g}$, we also have $\Hs\triangleleft\K_+$.

\section{Other constructions}\label{sec:ex5}

\subsection{Extensions}
A standard construction in cohomogeneity one is to extend the $\G$-action to a larger group that also acts by cohomogeneity one as follows.

\begin{lemma}
Let $M$ be a cohomogeneity one manifold with group diagram $\Hs\subset\{\K_-,\K_+\}\subset\G$. For any Lie group extension $\G\hookrightarrow\widetilde\G$, the group diagram $\Hs\subset\{\K_-,\K_+\}\subset\widetilde\G$ corresponds to a cohomogeneity one manifold $\widetilde M$ that is the total space of a fiber bundle $M\to \widetilde M\to \widetilde\G/\G$.
\end{lemma}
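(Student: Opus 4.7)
The plan is to construct $\widetilde M$ directly via the decomposition formula \eqref{eq:decomp} and then exhibit it as an associated bundle over $\widetilde\G/\G$ with fiber $M$. First, observe that the inclusions $\Hs\subset\K_\pm\subset\G\hookrightarrow\widetilde\G$ make sense of the new group diagram $\Hs\subset\{\K_-,\K_+\}\subset\widetilde\G$, and crucially the homogeneous spheres $\K_\pm/\Hs$ appearing at the bottom of the diagram are unchanged. Hence the existence-and-uniqueness statement recalled after \eqref{eq:groupdiagram} produces a well-defined cohomogeneity one $\widetilde\G$-manifold
\[
\widetilde M=\big(\widetilde\G\times_{\K_-}D_-\big)\cup_{\widetilde\G/\Hs}\big(\widetilde\G\times_{\K_+}D_+\big),
\]
where the slices $D_\pm$ (together with the $\K_\pm$-actions on them via the slice representation) are the same as for $M$.

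Next I would identify each tubular piece of $\widetilde M$ as an associated bundle over $\widetilde\G/\G$. Since $\K_\pm\subset\G\subset\widetilde\G$, the standard ``transitivity of induction'' for associated bundles gives an equivariant diffeomorphism
\[
\widetilde\G\times_{\K_\pm}D_\pm\;\cong\;\widetilde\G\times_{\G}\big(\G\times_{\K_\pm}D_\pm\big),
\]
sending $[\widetilde g,x]\mapsto [\widetilde g,[e,x]]$ when $\widetilde g\in\widetilde\G$ and $x\in D_\pm$, and the composition with projection to $\widetilde\G/\G$ realizes a fiber bundle with fiber $\G\times_{\K_\pm}D_\pm=D(S_\pm)$.

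Then I would check that these identifications agree on the common boundary $\widetilde\G/\Hs$ used in the gluing. Indeed, the boundary of each piece is $\widetilde\G\times_{\K_\pm}(\partial D_\pm)\cong\widetilde\G\times_\G(\G\times_{\K_\pm}\partial D_\pm)\cong\widetilde\G\times_\G(\G/\Hs)$, and both identifications restrict to the canonical diffeomorphism $\widetilde\G/\Hs\cong\widetilde\G\times_\G(\G/\Hs)$. Hence the gluing is compatible and one obtains a global identification
\[
\widetilde M\;\cong\;\widetilde\G\times_{\G}M,
\]
which is manifestly the total space of a fiber bundle $M\to\widetilde M\to\widetilde\G/\G$ associated to the principal $\G$-bundle $\G\to\widetilde\G\to\widetilde\G/\G$.

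The only real subtlety is verifying that the two associated-bundle identifications of the tubular pieces coincide on the shared principal-orbit boundary; this is a diagram chase that reduces to the naturality of the associated bundle construction with respect to the chain of subgroup inclusions $\Hs\subset\K_\pm\subset\G\subset\widetilde\G$. Everything else is formal: the cohomogeneity one structure of $\widetilde M$ is built into the decomposition, the principal isotropy is still $\Hs$ and the singular isotropies are still $\K_\pm$, and the fibration structure is inherited from the principal bundle $\widetilde\G\to\widetilde\G/\G$.
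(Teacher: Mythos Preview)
The paper states this lemma without proof, treating the extension construction as standard. Your argument is correct and supplies exactly the details one would expect: the new group diagram is valid because the spheres $\K_\pm/\Hs$ are unchanged, and the transitivity-of-induction identification $\widetilde\G\times_{\K_\pm}D_\pm\cong\widetilde\G\times_\G(\G\times_{\K_\pm}D_\pm)$, together with the boundary compatibility, yields $\widetilde M\cong\widetilde\G\times_\G M$ as an associated bundle over $\widetilde\G/\G$.
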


Note that this extension process increases the codimension of singular orbits, but the isotropy groups remain unchanged. Thus, if the Theorem in the Introduction applies with the assumption $\Hs\triangleleft\K_-$ or $\Hs\triangleleft\K_+$ to $M$, then it automatically applies to $\widetilde M$ with the same assumption. Using this in the above concrete examples one obtains a very large class of Delaunay-type hypersurfaces of arbitrary dimension.

\subsection{Spherical pairs}\label{subsec:kprod}
Another large class of cohomogeneity one manifolds that satisfy the normality assumption $\Hs\triangleleft\K_-$ can be obtained by using the well-known table of transitive actions on spheres, that we reproduce below for the readers' convenience. Choose $\K_-$ and $\Hs$ from this table, so that $\K_-/\Hs=S^n$, and set $\K_+:=\Hs\times\mathsf S^1$ (or $\K_+:=\Hs\times\mathsf S^3$). Then, for any $\G$ that contains $\K_\pm$, one has a cohomogeneity one manifold given by the group diagram $\Hs\subset\{\K_-,\K_+\}\subset \G$ to which our result applies, with $\GG=\K_+/\Hs$. For example, one can always choose $\G=\K_-\times\mathsf S^1$ (or $\G=\K_-\times\mathsf S^3$). In this case, we get an action on $M=S^{n+2}$ (or $M=S^{n+4}$) whose singular orbits are $S_-=S^1$ (or $S_+=S^3$) and principal orbits are $S^1\times S^n$ (or $S^3\times S^n$). These are precisely the sum actions on spheres that give Clifford tori as the corresponding Delaunay-type hypersurfaces discussed in Subsection~\ref{subsub:nonesssph}. Nevertheless, one can choose a different group $\G$ to obtain different examples.

\begin{table}[htf]
\caption{Transitive actions on $S^n$}
\begin{tabular}{|ccc|}
\hline
$M$ \rule{0pt}{2.5ex} \rule[-1.2ex]{0pt}{0pt} & $\K$ & $\Hs$ \\
\hline \hline
$S^n$ \rule{0pt}{2.5ex} & $\SO(n+1)$ & $\SO(n)$ \\
$S^{2n+1}$  & $\SU(n+1)$ & $\SU(n)$ \\
$S^{2n+1}$  & $\U(n+1)$ & $\U(n)$ \\
$S^{4n+3}$  \rule[-1.2ex]{0pt}{0pt} & $\Sp(n+1)$ & $\Sp(n)$ \\
\hline
\end{tabular}\quad
\begin{tabular}{|ccc|}
\hline
$M$ \rule{0pt}{2.5ex} \rule[-1.2ex]{0pt}{0pt} & $\K$ & $\Hs$ \\
\hline \hline
$S^{4n+3}$ \rule{0pt}{2.5ex} & $\Sp(n+1)\Sp(1)$ & $\Sp(n)\Delta\Sp(1)$ \\
$S^{4n+3}$ & $\Sp(n+1)\U(1)$ & $\Sp(n)\Delta\U(1)$ \\
$S^{15}$ & $\Spin(9)$ & $\Spin(7)$ \\
$S^7$  & $\Spin(7)$ & $\mathsf G_2$ \\
$S^6$  \rule[-1.2ex]{0pt}{0pt} & $\mathsf G_2$ & $\SU(3)$ \\
\hline
\end{tabular}
\end{table}

\subsection{Doubles}
Suppose $M$ is a cohomogeneity one manifold that satisfies the assumptions of the Theorem in the Introduction for the singular orbit $S_-=\G/\K_-$. Regardless of what the other singular isotropy $\K_+$ is, we can define a new cohomogeneity one manifold by setting $\K_+:=\K_-$. We call this procedure a \emph{double}, since the resulting manifold is equivariantly diffeomorphic to the disk bundle $D(S_-)$ glued with a copy of itself along the boundary. Since the assumptions were satisfied near $S_-$, they are satisfied near both $S_\pm$ in the new manifold. This procedure can clearly change the topological type of $M$, but the tubular neighborhoods $D(S_-)$ in the original manifold and the resulting manifold are isometric.

\begin{example}
As a concrete example of the above procedure, consider the $\U(k)$-action on $\C P^k$ with a fixed point $S_+=\{p\}$, described in Example~\ref{ex:cpn}. Then $\Hs=\U(k-1)$ sits in $\K_-=\U(1)\U(k-1)$ as a factor and is hence a normal subgroup, but $\Hs$ is not normal in $\K_+=\U(k)$. Replacing $\K_+$ with $\K'_+:=\K_-=\U(1)\U(k-1)$, we get a new cohomogeneity one manifold $M$ where both singular orbits are $S'_\pm=\C P^{k-1}$ and the Theorem in the Introduction applies to both of them. It is easy to see that $M\cong\C P^k\#\overline{\C P}^k$, e.g., by using a metric ball around $p$ to be the deleted $k$-ball needed for the connected sum operation. The boundary $S^{2k-1}$ of such metric ball is a principal orbit of the original $\U(k)$-action and is the interface along which the two disk bundles $D(S_-)=\G\times_{\K_-} D$ are glued together. Thus, our result applied in this setup gives existence of rotationally symmetric Delaunay-type hypersurfaces $S^{2k-1}$ in $\C P^k\#\overline{\C P}^k$.
\end{example}

\end{document}